\def\gn#1#2{{$\href{http://groupnames.org/\#?#1}{#2}$}}
\def\gn#1#2{$#2$}  
\tikzset{sgplattice/.style={inner sep=1pt,norm/.style={red!50!blue},char/.style={blue!50!black},
  lin/.style={black!50}},cnj/.style={black!50,yshift=-2.5pt,left=-1pt of #1,scale=0.5,fill=white}}
\newtheorem{theorem}{Theorem}[section]
\newtheorem{definition}[theorem]{Definition}
\newtheorem{conjecture}[theorem]{Conjecture}
\newtheorem{proposition}[theorem]{Proposition}
\newtheorem{lemma}[theorem]{Lemma}
\newtheorem{remark}[theorem]{Remark}
\newcommand{\End}{\operatorname{End}}
\newcommand{\Q}{\mathbb Q}
\newcommand{\Qbar}{{\overline{\mathbb Q}}}
\newcommand{\Z}{\mathbb Z}
\newcommand{\F}{\mathbb F}
\newcommand{\C}{\mathbb C}
\newcommand{\GL}{\mathrm{GL}}
\newcommand{\Frob}{\operatorname{Frob}}
\newcommand{\Ver}{\operatorname{Ver}}
\newcommand{\cB}{{\mathcal B}}
\newcommand{\Gal}{\operatorname{Gal}}
\newcommand{\cO}{\mathcal{O}}
\newcommand{\cD}{\mathcal{D}}
\newcommand{\cL}{\mathcal{L}}
\newcommand{\gm}{\mathfrak{m}}
\newcommand{\gp}{\mathfrak{p}}
\newcommand{\gl}{\mathfrak{l}}
\newcommand{\gf}{\mathfrak{f}}
\newcommand{\sss}{\operatorname{ss}}
\theoremstyle{definition}
\newenvironment{sketchproof}
{\begin{proof}[Sketch of proof]}
{\end{proof}}
\begin{document}
    
\title{
Modular forms of CM type mod $\ell$}

\author[Dieulefait]{Luis Dieulefait}
\address[Dieulefait]{Universitat de Barcelona}

\author[González]{Josep González}
\address[González]{Universitat Politècnica de Catalunya}

\author[Lario]{Joan-C. Lario}
\address[Lario]{Universitat Politècnica de Catalunya}


\date{\today. The first and third authors are funded by the project PID2022-136944NB-IOO (MECD)}


\begin{abstract}
We say that a normalized modular form is of CM type modulo $\ell$ by an imaginary quadratic field $K$ if its Fourier coefficients $a_p$ are congruent to $0$ modulo a prime $\mathcal{L}\mid \ell$ for every prime $p$ that is inert in $K$.

In this paper, we address the following question. Let $f$ be a weight~$2$ cuspidal Hecke eigenform without complex multiplication which is of CM type modulo $\ell$ by an imaginary quadratic field $K$. Does there exist a congruence modulo $\ell$ between $f$ and a genuine CM modular form of weight~$2$?

We conjecture that such a congruence always exists.
We prove this conjecture for $\ell>2$ and $\ell\neq 3$ when $K=\mathbb{Q}(\sqrt{-3})$. In this setting, we discuss three situations: (i) modular forms attached to abelian surfaces with quaternionic multiplication, (ii) $\mathbb{Q}$-curves completely defined over an imaginary quadratic field, and (iii) elliptic curves over $\mathbb{Q}$ whose $5$-torsion Galois representation has image the maximal cyclic of order $16$ inside $\GL_2(\F_5)$.

In all these cases, the modular forms under consideration are of CM type modulo suitable primes~$\ell$, and we show that the associated residual Galois representations are monomial with respect to an imaginary quadratic field $K$ (in some instances, more than one such field).

Finally, we present numerical evidence that motivated the conjecture and provides further support for its validity beyond the cases treated in this paper.
\\
AMS MSC: 11F80, 11F11, 11F33.
\end{abstract}

\flushbottom
\maketitle
\tableofcontents
\thispagestyle{empty}

\section{Introduction}

Let $f \in S_2(\Gamma_0(N))^{\mathrm{new}}$ be a normalized newform of weight $2$ and level $N$. We denote by
\[
f(q)=\sum_{n=1}^\infty a_n q^n
\]
its $q$-expansion, and by $E=\mathbb{Q}(\{a_n\})$ its Fourier coefficient field. Let $A/\mathbb{Q}$ be the abelian variety quotient of the Jacobian $J_0(N)$ attached to $f$ via Shimura’s construction. Then $\mathrm{End}_{\mathbb{Q}}(A)$ is isomorphic to an order in the ring of integers of $E$. Since we assume that $f$ has trivial Nebentypus, the field $E$ is totally real.

Let $\ell$ be a prime. For every prime ideal $\mathfrak{L}\mid \ell$ of $E$, there exists a Galois representation
\[
\varrho \colon \mathrm{Gal}(\overline{\mathbb{Q}}/\mathbb{Q}) \longrightarrow \mathrm{GL}_2(\mathbb{F}),
\]
where $\mathbb{F}$ denotes the residue field of $E$ at $\mathfrak{L}$. This representation is unramified outside $\ell N$ and satisfies
\[
\mathrm{tr}(\varrho(\mathrm{Frob}_p)) \equiv a_p \pmod{\mathfrak{L}}, 
\qquad 
\det(\varrho(\mathrm{Frob}_p)) = p
\]
for all primes $p \nmid \ell N$. The representation $\varrho$ describes the Galois action on the $\mathfrak{L}$-torsion module
\[
A[\mathfrak{L}] = \bigcap_{\alpha \in \mathfrak{L} \cap \mathrm{End}(A)} \ker(\alpha \colon A \to A).
\]
It is odd, and is surjective for all sufficiently large $\ell$ provided that $f$ does not have complex multiplication.

Recall that $f$ has complex multiplication (CM) by an imaginary quadratic field $K$ if there exists a nontrivial Dirichlet character $\varepsilon$ associated with $K$ such that
\[
a_p = \varepsilon(p)\, a_p
\]
for all primes $p \nmid N$. Equivalently, $a_p=0$ for all primes $p \nmid N$ that are inert in $K$.

The definition and question we introduce below arise naturally in the context of congruences between modular forms and constitute the central theme of this article.

\begin{definition}
Let $f(q)=\sum_{n=1}^\infty a_n q^n \in S_2(\Gamma_0(N))^{\mathrm{new}}$ be a normalized newform and let $\ell$ be a prime. We say that $f$ is of CM type modulo $\ell$ if there exist a prime ideal $\mathfrak{L}\mid \ell$ of $E$ and an imaginary quadratic field $K$ such that
\[
a_p \equiv 0 \pmod{\mathfrak{L}}
\]
for all primes $p \nmid \ell N$ that are inert in $K$. In this case, we also say that $f$ is of CM type modulo $\ell$ by~$K$.
\end{definition}

It is easy to construct examples of newforms that are of CM type modulo $\ell$ but do not have genuine CM. The examples presented in Section~\ref{examples} led us to formulate the following conjecture.

\begin{conjecture}\label{conj}
Let $f(q)=\sum_{n=1}^\infty a_n q^n \in S_2(\Gamma_0(N))^{\mathrm{new}}$ be a normalized newform of CM type modulo $\ell$ by $K$. Then there exists a normalized newform
\[
g(q)=\sum_{n=1}^\infty b_n q^n \in S_2(\Gamma_0(M))^{\mathrm{new}}
\]
with CM by $K$ such that
\[
a_p \equiv b_p \pmod{\mathfrak{L}}
\]
for all primes $p \nmid NM\ell$, where $\mathfrak{L}\mid \ell$ is a prime ideal in the compositum of the Fourier coefficient fields of $f$ and $g$.
\end{conjecture}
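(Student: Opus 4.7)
The plan is to translate the arithmetic hypothesis into a Galois-theoretic statement about the residual representation $\bar\varrho := \varrho_{f,\mathfrak L}$, extract a character of $G_K := \Gal(\Qbar/K)$, lift it to a Hecke character of the appropriate infinity type, and take its theta series. First I would reinterpret the hypothesis via the Chebotarev density theorem: the primes $p$ inert in $K$ are exactly those with $\Frob_p$ in the non-trivial coset of $G_K$ in $\GalQ$, so the vanishing $a_p\equiv 0\pmod{\mathfrak L}$ for all such $p$ forces $\operatorname{tr}(\bar\varrho(g))=0$ for every $g\in \GalQ\setminus G_K$. Assuming $\bar\varrho$ is absolutely irreducible, the classical dihedral criterion---which requires $2$ to be invertible in the residue field, hence $\ell>2$---yields an isomorphism $\bar\varrho\cong \operatorname{Ind}_{G_K}^{\GalQ} \chi$ for some character $\chi\colon G_K\to \Fbar^\times$.

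Next I would analyze $\chi$ via class field theory. Since $\Fbar$ is the algebraic closure of $\F_\ell$, the character $\chi$ has finite order and corresponds to a character of a ray class group of $K$. The determinant relation $\det\bar\varrho\equiv \chi_\ell\pmod{\mathfrak L}$ translates into $\chi\cdot \chi^c\equiv\chi_\ell|_{G_K}$, where $\chi^c$ denotes the conjugate of $\chi$ by the non-trivial element of $\Gal(K/\Q)$. The crucial step is to lift $\chi$ to a characteristic-zero algebraic Hecke character $\psi$ of $K$ of infinity type $(1,0)$---the infinity type attached to weight $2$---whose reduction at a fixed prime above $\ell$ recovers $\chi$. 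Concretely, one multiplies the Teichm\"uller lift of $\chi$ by a fixed infinity-type $(1,0)$ Hecke character whose mod-$\mathfrak L$ reduction is trivial (obtained, for instance, from a CM elliptic curve over $K$); the compatibility condition on $\mathcal O_K^\times$---which is automatic whenever $\mathcal O_K^\times=\{\pm 1\}$---is what allows this to descend to an actual Hecke character on the id\`ele class group.

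Once $\psi$ is in hand, its theta series $g(q)=\sum_{\mathfrak a}\psi(\mathfrak a)\,q^{N\mathfrak a}$, summed over integral ideals $\mathfrak a$ of $K$ coprime to the conductor, is a weight~$2$ cusp form with CM by $K$ whose mod-$\mathfrak L$ Galois representation is $\operatorname{Ind}_{G_K}^{\GalQ}(\psi\bmod\mathfrak L)\cong\bar\varrho$. Matching traces at Frobenius elements gives $a_p\equiv b_p\pmod{\mathfrak L}$ for all primes $p\nmid NM\ell$, where $M$ is the level of $g$; passing to the associated newform finishes the construction.

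The main obstacles are twofold. When $\bar\varrho$ is reducible---i.e., $f$ is residually Eisenstein at $\mathfrak L$---Step~1 breaks down and one must instead exhibit each Jordan--H\"older constituent as the mod-$\mathfrak L$ reduction of a character attached to a genuine CM form, which requires a separate argument. The arithmetically deeper obstacle is Step~2: the existence of an infinity-type-$(1,0)$ Hecke character lift of $\chi$ can be obstructed by the interaction of $\mathcal O_K^\times$ with the prime $\ell$. The excluded case $K=\Q(\sqrt{-3})$, $\ell=3$ is precisely where the extra sixth roots of unity in $\mathcal O_K^\times$ collide with the unique ramified prime $(\sqrt{-3})$ above $\ell$, preventing a compatible descent of the lift to the id\`ele class group; a preliminary twist or an \emph{ad hoc} argument would be required there, which explains the stated restriction. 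Controlling the level $M$ of $g$ is then routine via a local comparison of the conductors of $\bar\varrho$ and of $\operatorname{Ind}_{G_K}^{\GalQ}\psi$.
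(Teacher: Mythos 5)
Your overall architecture --- show the residual representation is induced from a character $\chi$ of $G_K=\Gal(\Qbar/K)$, lift $\chi$ to an algebraic Hecke character of infinity type $(1,0)$, and take its theta series --- is the same as the paper's (Theorem \ref{deformation} combined with Theorem \ref{teor}). However, the two places you flag as "obstacles" are exactly where the paper does its real work, and in both places your proposed treatment would not go through. First, the reducible case: the paper needs no separate argument there. It replaces $\varrho$ by its semisimplification $\varrho^{\sss}$ (harmless, since the conjectured congruence only involves traces of Frobenius) and invokes Lemma 6.7 of \cite{BP}, which asserts that a two-dimensional \emph{semisimple} representation whose trace vanishes on $\GalQ\setminus G_K$ is induced from a character of $G_K$, irreducible or not; in the reducible case the constituents are forced to be $\chi_1$ and $\chi_1\varepsilon$ (with $\varepsilon$ the quadratic character of $K$), and $\chi_1\oplus\chi_1\varepsilon\cong\operatorname{Ind}_K^\Q\bigl(\chi_1|_{G_K}\bigr)$. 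So your suggested remedy --- realizing each Jordan--H\"older constituent separately as the reduction of a character attached to a CM form --- is both unnecessary and misdirected: no individual constituent need come from a CM form; it is the direct sum that is monomial, after which the lifting argument applies verbatim.

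Second, and more seriously, your lifting step rests on an auxiliary infinity-type-$(1,0)$ Hecke character $\psi_0$ of $K$ with \emph{trivial} mod-$\mathfrak{L}$ reduction, which you propose to obtain from a CM elliptic curve over $K$. That source cannot work: if $E/K$ has CM by $\cO$ and good reduction at the prime $\mathfrak{l}$ of $K$ below $\mathfrak{L}$, the reduction of its Hecke character gives the $G_K$-action on $E[\mathfrak{l}]$ and is ramified at $\mathfrak{l}$ (on inertia at $\mathfrak{l}$ it is the reduction map $\cO_{\mathfrak{l}}^*\to(\cO/\mathfrak{l})^*$ up to a finite-order twist), hence never trivial; moreover CM elliptic curves over $K$ itself exist only when $h(K)=1$. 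A character $\psi_0$ with trivial reduction \emph{does} exist under the stated hypotheses, but constructing it is precisely the content of the paper's Theorem \ref{deformation}: one must build the inverse Teichm\"uller character at $\mathfrak{l}$ into the Dirichlet-character part so that it cancels the algebraic part modulo $\overline{\mathfrak{L}}$, verify well-definedness on units --- this is where the embedding $\cO^*\hookrightarrow\F_{\ell^n}^*$, i.e.\ $\ell>2$ and $(\ell,K)\neq(3,\Q(\sqrt{-3}))$, is genuinely used, and it is also why the paper excludes $\ell=2$ for \emph{every} $K$, not merely because of a dihedral criterion --- and then extend over the class group by choosing roots $a_i$ with $a_i^{e_i}=\alpha_i\widetilde\eta(\alpha_i)$ reducing to the prescribed values modulo $\overline{\mathfrak{L}}$. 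Your proposal takes the existence of $\psi_0$ as a known input, so the technical heart of the proof is absent; what you do diagnose correctly is that the excluded cases stem from the interaction of $\cO^*$ with $\ell$.
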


Variants of this conjecture have been studied in \cite{Billerey}, when the Serre weight of the residual representation lies in $[2,\ell-1]$, and in \cite{Nua} for residual dihedral representations of Serre weight~$2$. Our main result is the following.

\begin{theorem}\label{thm:main}
Assume that $\ell>2$, and, in the case $K=\mathbb{Q}(\sqrt{-3})$, that $\ell \neq 3$. Then Conjecture~\ref{conj} holds.
\end{theorem}

The structure of the paper is as follows. In Section~2, we review the basic properties of residual Galois representations attached to CM newforms with trivial Nebentypus, including their monomial nature, and recall a result of Billerey--Mortarino relevant to our setting. In Section~3, we prove a lifting result (Theorem~\ref{deformation}) that constructs Hecke characters from mod $\ell$ characters under suitable hypotheses, and deduce Theorem~\ref{thm:main}.

Sections~4--7 are devoted to three families of modular forms of CM type modulo $\ell$. Two of these arise from abelian surfaces with additional endomorphisms defined over an imaginary quadratic field (either with quaternionic multiplication or the square of a $\mathbb{Q}$-curves), while the third corresponds to a family of elliptic curves over $\mathbb{Q}$ of CM type modulo $5$. In all cases, we describe explicitly the associated Galois representations as induced from characters of $\mathrm{Gal}(\overline{\mathbb{Q}}/K)$.

Finally, in Section~\ref{examples} we present numerical evidence that motivated the conjecture and supports the validity of our results.

\section{Galois representations arising from CM modular forms}

\noindent Keeping the notation of the Introduction, the following lemma exhibits the connection between CM newforms and monomial representations (i.e., those induced from characters).

\begin{lemma}\label{Nualart}
Assume that $f \in S_2(\Gamma_0(N))^{\mathrm{new}}$ is a newform with CM by an imaginary quadratic field~$K$. Let $\varrho$ be the Galois representation attached to $f$ and a prime $\mathfrak{L}\mid \ell$ as above, and let $n$ be the residue degree of $\mathfrak{L}$. Then:
\begin{itemize}
\item[(i)] There exists a character 
\[
r \colon \mathrm{Gal}(\overline{\mathbb{Q}}/K) \longrightarrow \mathbb{F}_{\ell^{2n}}^\times
\]
such that $\mathrm{Ind}_K^{\mathbb{Q}}(r) \simeq \varrho$.

\item[(ii)] If $\ell>2$, then the associated projective representation
\[
\overline{\varrho} \colon \mathrm{Gal}(\overline{\mathbb{Q}}/\mathbb{Q}) \longrightarrow \mathrm{PGL}_2(\mathbb{F})
\]
has image isomorphic to either a dihedral group $D_n$ of order $2n\geq 6$ or the cyclic group $C_2$. The dihedral case occurs if and only if $\mathfrak{L}$ is unramified in $EK/E$. In the cyclic case, $\overline{\varrho}$ is the quadratic character attached to $K$.
\end{itemize}
\end{lemma}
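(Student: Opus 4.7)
For (i), the plan is to invoke the classical Shimura--Ribet description of the $\ell$-adic Galois representation attached to a weight-$2$ CM newform: there exists a Hecke character $\psi$ of $K$ of infinity-type $1$ such that the $\ell$-adic representation $\varrho_\ell$ of $f$ is isomorphic to $\operatorname{Ind}_H^G \chi_\psi$, where $G = \Gal(\Qbar/\Q)$, $H = \Gal(\Qbar/K)$, and $\chi_\psi$ is the $\ell$-adic character attached to $\psi$, taking values in $(EK)_{\mathfrak{L}'}^\times$ for a chosen prime $\mathfrak{L}'$ of $EK$ above $\mathfrak{L}$. Since reduction modulo $\mathfrak{L}'$ commutes with induction of Galois representations, letting $r$ be the reduction of $\chi_\psi$ gives $\operatorname{Ind}_K^\Q r \cong \varrho$. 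Because $[EK : E] \le 2$, the residue field at $\mathfrak{L}'$ has order $\ell^n$ or $\ell^{2n}$, so in all cases $r \colon H \to \F_{\ell^{2n}}^\times$, as claimed.

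For (ii), the plan is to compute $\overline{\varrho}(G)$ directly from $\varrho = \operatorname{Ind}_K^\Q r$. Fix $\sigma \in G \setminus H$ and set $r^\sigma(h) := r(\sigma h \sigma^{-1})$. Then $\varrho|_H \cong r \oplus r^\sigma$ over $\overline{\F}_\ell$, and two cases arise. If $r = r^\sigma$, the character $r$ extends to $\tilde r$ on $G$ and a short computation gives $\varrho \cong \tilde r \oplus (\tilde r \otimes \chi_K)$, where $\chi_K$ is the quadratic character of $K/\Q$; projectively this is $\chi_K$, with image $C_2$. If $r \neq r^\sigma$, then $\varrho$ is irreducible; in a basis of $H$-eigenvectors the matrices $\varrho(H)$ are diagonal and $\varrho(\sigma)$ is anti-diagonal, so $\overline{\varrho}(H)$ is the cyclic image of the character $r^\sigma/r$ in $\F_{\ell^{2n}}^\times$ and $\overline{\varrho}(\sigma)$ normalizes it by inversion, giving a dihedral $\overline{\varrho}(G)$.

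To close, one must identify the dichotomy $r = r^\sigma$ vs.\ $r \ne r^\sigma$ with the ramification of $\mathfrak{L}$ in $EK/E$. The easy direction is that ramified $\Rightarrow$ $r = r^\sigma$: the unique prime $\mathfrak{L}'$ above $\mathfrak{L}$ is $\sigma$-fixed with trivial residue field extension, so the reduction map at $\mathfrak{L}'$ is $\sigma$-equivariant. The converse --- that $\mathfrak{L}$ unramified forces $r \ne r^\sigma$ --- is where I expect the main obstacle to lie. It requires showing that $\chi_\psi/\chi_\psi^\sigma$, which is non-trivial $\ell$-adically by irreducibility of $\varrho_\ell$, remains non-trivial after reduction modulo $\mathfrak{L}'$; in the inert case this amounts to verifying that $r$ does not take values inside $\F_{\ell^n}^\times$ (equivalent to $r^\sigma = r^{\ell^n}$ being non-trivial), and in the split case that reduction at the other prime differs from reduction at $\mathfrak{L}'$. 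The plan is to use the hypothesis $\ell > 2$ together with the explicit structure of $\chi_\psi$ --- in particular, that the finite-order part of $\chi_\psi/\chi_\psi^\sigma$ has order coprime to $\ell$ --- to rule out cancellation upon reduction.
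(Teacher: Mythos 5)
Your part (i) and your diagonal/anti-diagonal computation of the projective image are correct and match the paper in substance: the paper also obtains $r$ by reducing the Hecke character $\psi$ of Shimura modulo a prime $\mathfrak{l}$ of $EK$ (checking traces and determinants at Frobenius elements and invoking Chebotarev), and it cites Nualart for the dichotomy ``$C_2$ if $r=\overline{r}$, dihedral if $r\neq\overline{r}$'' (your $r^\sigma$ is the paper's $\overline{r}$). Your ``easy direction'' is also correct: when $\mathfrak{L}$ ramifies in $EK/E$ the unique prime $\mathfrak{l}$ above it is $\sigma$-stable with residue field equal to that of $\mathfrak{L}$, so reduction is $\sigma$-equivariant and $r=\overline{r}$; amusingly, this is the one direction the paper's proof leaves implicit. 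The genuine gap is exactly the step you flag as the ``main obstacle'', which is precisely the direction the paper does prove: that $r=\overline{r}$ forces $\mathfrak{L}$ to ramify in $EK/E$ (equivalently, $\mathfrak{L}$ unramified forces the dihedral case). Your proposal contains no proof of this, only a hope.

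Moreover, the plan you sketch for it would not close the gap. The character $\chi_\psi/\chi_\psi^\sigma$ is not of finite order (its infinity type is $z/\overline{z}$), so the appeal to ``the finite-order part having order coprime to $\ell$'' has no clear meaning; and in any case an $\ell$-adically non-trivial character can perfectly well reduce to the trivial character --- indeed this is exactly what happens when $\mathfrak{L}$ ramifies --- so any correct argument must actually use the unramifiedness of $\mathfrak{L}$ in $EK/E$, which your sketch never does. The paper's mechanism is different and is where the hypotheses ``$\ell>2$'' and ``$E$ totally real'' enter: triviality of the Nebentypus gives $\psi(\overline{\mathfrak{a}})=\overline{\psi(\mathfrak{a})}$, so $\psi(\mathfrak{a})+\psi(\overline{\mathfrak{a}})=\operatorname{Tr}_{EK/E}\bigl(\psi(\mathfrak{a})\bigr)$ lies in $E$ and reduces into $\F_{\ell^n}$, the residue field of $E$ at $\mathfrak{L}$; hence $r=\overline{r}$ gives $2\psi(\mathfrak{a})\bmod\mathfrak{l}\in\F_{\ell^n}$, and since $\ell>2$ every value $\psi(\mathfrak{a})\bmod\mathfrak{l}$ lies in $\F_{\ell^n}$. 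Because the values $\psi(\mathfrak{a})$ generate $EK$, the quadratic CM extension of the totally real field $E$, the paper concludes that this confinement of all reductions to $\F_{\ell^n}$ can occur only when $\mathfrak{L}$ ramifies. Whatever one makes of the brevity of that last step (ruling out the split case is stated rather tersely there), this trace-to-the-totally-real-subfield argument is the idea your proposal is missing; the role of $\ell>2$ is to divide by $2$, not to control orders of finite characters.
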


\begin{proof}
By Shimura~\cite{Shi}, there exists an integral ideal $\mathfrak{m}$ and a Hecke character 
\[
\psi \colon I(\mathfrak{m}) \longrightarrow \mathbb{C}^\times,
\]
where $I(\mathfrak{m})$ denotes the group of fractional ideals of $K$ coprime to $\mathfrak{m}$, such that
\[
f(q) = \sum_{\mathfrak{a}} \psi(\mathfrak{a})\, q^{\mathrm{N}(\mathfrak{a})}
= \sum_{n=1}^\infty a_n q^n,
\]
where $\mathfrak{a}$ runs over integral ideals coprime to $\mathfrak{m}$. The number field $\mathbb{Q}(\{\psi(\mathfrak{a})\})$ generated by the values of $\psi$ is the quadratic extension $EK$ of the Fourier coefficient field $E$ of $f$.

Since $f$ has trivial Nebentypus, one has $\mathfrak{m} = \overline{\mathfrak{m}}$ (see Lemma~2.1 of~\cite{GL2}). In particular, if $\mathfrak{a} \in I(\mathfrak{m})$, then also $\overline{\mathfrak{a}} \in I(\mathfrak{m})$.

Let $\mathfrak{l}$ be a prime of $EK$ above $\ell$, and define a character
\[
r \colon \mathrm{Gal}(\overline{\mathbb{Q}}/K) \longrightarrow \mathbb{F}_{\ell^{2n}}^\times,
\qquad
\mathrm{Frob}_{\mathfrak{p}} \longmapsto \psi(\mathfrak{p}) \bmod \mathfrak{l}.
\]
The induced representation 
$\operatorname{Ind}_K^\Q(r)$ satisfies:
$$
\operatorname{Ind}_K^\Q(r)(\Frob_p) = 
\begin{cases}
\begin{pmatrix}
    r(\Frob_{\mathfrak{p}}) & 0 \\
    0 &r(\Frob_{\overline{\mathfrak{p}}})
\end{pmatrix} 
& \text{if $(p)=\mathfrak{p}\overline{\mathfrak{p}}$ splits in $K$;}\\[18pt]
\begin{pmatrix}
    0 & r(\Frob_{p}c) \\
r(c\Frob_{p}) & 0
\end{pmatrix} 
& \text{if $(p)=\mathfrak{p}$ is inert in $K$,}\\
\end{cases}
$$
where $c$ stands for a complex conjugation in $\Gal(\Qbar/\Q)$. If $p$ is inert in $K$, then
$c \Frob_p=\Frob_p c= \sigma$ with $\sigma \in \Gal(\Qbar/K)$ and
$\Frob_p^2 = \sigma^2=\Frob_{\mathfrak{p}}$. Hence, one has 
$$
\begin{array}{l@{\,}l}
\operatorname{tr}
\operatorname{Ind}_K^\Q(r)(\Frob_p) &
\equiv a_p \bmod{\mathfrak{L}} \\[4pt]
\operatorname{det}
\operatorname{Ind}_K^\Q(r)(\Frob_p) & = p \bmod{\mathfrak{L}}.
\end{array}
$$
By the Chebotarev density theorem, it follows that $\mathrm{Ind}_K^{\mathbb{Q}}(r) \simeq \varrho$.

For the second statement, Nualart~\cite[Proposition~1]{Nua} shows that the image of the projective representation is dihedral if and only if $r \neq \overline{r}$, where $\overline{r}(\mathfrak{a}) = r(\overline{\mathfrak{a}})$.  Notice that $\overline{r}$ is denoted by ${\,}^\sigma r$ in \cite{Nua}.

If $r=\overline{r}$, then the projective representation of $\varrho$ coincides with the quadratic character attached to $K$ since $\varrho$ projects to $\operatorname{Id}$ on the Frobenius at split primes in $K$ and to 
the anti-identity matrix
on Frobenius at inert primes in $K$. In such a case, we have 
$\psi(\mathfrak{a}) \equiv  
\psi(\overline{\mathfrak{a}})\bmod \mathfrak{l}
$ for all $\mathfrak{a}$ in~$I(\mathfrak{m})$. Hence, 
$2\psi(\mathfrak{a})$ mod $\mathfrak{l}$ belongs to $\F_{\ell^n}$. Since we assume that $\ell>2$, then $\psi(\mathfrak{a})$ mod $\mathfrak{l}$ lands in~$\F_{\ell^n}$ for all $\mathfrak{a} \in I(\mathfrak{m})$.
This can occur only when 
$\mathfrak{L}$ ramifies in $EK$ since $[EK\colon E]=2$ as $E$ is totally real.
\end{proof}

\begin{remark}
If $f \in S_2(\Gamma_1(N))^{\mathrm{new}}$ has nontrivial Nebentypus, then the residual representation attached to $f$ is still induced from a character $r$ as above.
\end{remark}

\begin{remark}
Let $\mathcal{D}$ be the different of $K/\mathbb{Q}$ and let $\varepsilon$ be the Dirichlet character attached to $K$. If $f$ has CM and corresponds to a Hecke character $\psi \colon I(\mathfrak{m}) \to \mathbb{C}^\times$, then its level is $N = \mathrm{Norm}(\mathfrak{m}\mathcal{D})$, and its Nebentypus is given by $a \mapsto \varepsilon(a)\psi((a))/a$ for $(a,N)=1$. In the situation of Lemma~\ref{Nualart}, , the conductor $\gf(r)$ of the extension $\Qbar^{\ker  r}/K$ divides~$\ell\gm$. When the Nebentypus of $f$ is trivial, the ideal~$\cD$ must divide $\gm$. We notice that the conductor $\gf(r)$ may divide $\ell\gm \cD^{-1}$. See the examples in Section~8.2.4.
\end{remark}

The following result is a reformulation of \cite[Theorem~1.1]{Billerey} and provides supporting evidence for Conjecture~\ref{conj}.

\begin{theorem}[Billerey--Mortarino]
Let $\ell\geq 5$ and let $f\in S_2(\Gamma_0(N))$ be a newform of CM type modulo $\mathfrak{L}$ by an imaginary quadratic field $K$. Let $\varrho$ be the Galois representation associated with $f$ and $\mathfrak{L}$. If the image of the projective representation $\overline{\varrho}$ is dihedral and the Serre weight $k(\varrho)$ satisfies $2\leq k(\varrho)\leq \ell-1$, then $f$ is congruent to a newform $g\in S_{k(\varrho)}(\Gamma_0(M))$ with CM by $K$ and level
\[
M =
\begin{cases}
N(\varrho) & \text{if $\ell$ is unramified in $K$,} \\
\ell^2 N(\varrho) & \text{if $\ell$ ramifies in $K$.}
\end{cases}
\]
\end{theorem}

\begin{remark}
Conjecture~\ref{conj} predicts that such a congruence exists already in weight $2$, even when $k(\varrho)>2$ or the projective image is not dihedral, and for all primes $\ell\geq 2$.
\end{remark}

\section{Hecke deformations of linear characters}

We begin by recalling that a Hecke character (of type $(1,0)$) modulo an ideal $\mathfrak{m}$ 
of the ring of integers $\mathcal{O}$ of $K$ is a group homomorphism
\[
\psi\colon I(\mathfrak{m})\longrightarrow \mathbb{C}^*
\]
such that $\psi((\alpha))=\alpha$ for all $\alpha \equiv 1 \pmod{\mathfrak{m}}$. Associated to~$\psi$ there is a Dirichlet character 
\[
\eta\colon (\mathcal{O}/\mathfrak{m})^*\longrightarrow \mathbb{C}^*
\]
satisfying $\eta(u)=1/u$ for all $u\in \mathcal{O}^*$ and such that
\[
\psi((\alpha))=\alpha\,\eta(\alpha)
\]
for all $\alpha\in\mathcal{O}$ coprime with $\mathfrak{m}$.

For such a Dirichlet character $\eta$ modulo $\mathfrak{m}$, 
there are exactly $h(K)$ Hecke characters $\psi$ modulo~$\mathfrak{m}$ satisfying $\psi((\alpha))=\alpha\eta(\alpha)$, where $h(K)$ is the class number of $K$.

Now let 
\[
r\colon \Gal(\overline{\mathbb{Q}}/K)\longrightarrow \mathbb{F}_{\ell^n}^*
\]
be a linear character. By class field theory, there exists an ideal $\mathfrak{m}$ supported on the primes ramified in the extension $\overline{\mathbb{Q}}^{\ker r}/K$ such that $r$ can be viewed as a morphism
\[
r\colon I(\mathfrak{m})\longrightarrow \mathbb{F}_{\ell^n}^*
\]
factoring through $I(\mathfrak{m})/H$, where
\[
H \supseteq P_{1}(\mathfrak{m})=
\{ (\alpha) \in I(\mathfrak{m}) \colon \alpha\equiv 1 \bmod \mathfrak{m} \}
\]
is a congruence subgroup. In particular, if $(\alpha)$ satisfies $\alpha\equiv 1 \pmod{\mathfrak{m}}$, then $r((\alpha))=1$.

\begin{definition}
We say that a morphism $r\colon I(\mathfrak{m})\longrightarrow \mathbb{F}_{\ell^n}^*$ satisfies the \emph{Hecke property} $(H)$ if:
\begin{itemize}
    \item[(i)] there exists a prime ideal $\mathfrak{l}\mid \ell$ of $K$ such that $(\mathcal{O}/\mathfrak{l})^*$ embeds into $\mathbb{F}_{\ell^n}^*$;
    \item[(ii)] $\mathfrak{l}\mid \mathfrak{m}$;
    \item[(iii)] $\mathcal{O}^*$ embeds into $\mathbb{F}_{\ell^n}^*$;
    \item[(iv)] $P_{1}(\mathfrak{m})\subseteq \ker r$.
\end{itemize}
\end{definition}
    
Conditions (i) and (ii) can always be satisfied by replacing $n$ with $2n$ and $\mathfrak{m}$ with $\mathfrak{m}\mathfrak{l}$ if necessary. Condition (iii) excludes the cases $\ell=2$ and $\ell=3$ when $K=\mathbb{Q}(\sqrt{-3})$. Condition (iv), which requires $\ker r$ to be a congruence subgroup, is the most substantial one, although it can also be ensured in practice as we shall discuss.

\begin{theorem}
\label{deformation}
Let $K/\mathbb{Q}$ be an imaginary quadratic field and 
let $r\colon I(\mathfrak{m})\longrightarrow \mathbb{F}_{\ell^n}^*$ be a linear character satisfying property $(H)$. 
Then there exists a prime ideal $\overline{\mathfrak{L}}\mid \ell$ in $\overline{\mathbb{Q}}$ and a Hecke character 
\[
\psi\colon I(\mathfrak{m})\longrightarrow \overline{\mathbb{Q}}^*
\]
such that
\[
\psi(\mathfrak{p})\equiv r(\mathfrak{p}) \pmod{\overline{\mathfrak{L}}}
\]
for all prime ideals $\mathfrak{p}$ in $I(\mathfrak{m})$.
\end{theorem}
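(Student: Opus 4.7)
\emph{Proof plan.} The plan is to extract from $r$ a Dirichlet character $\eta$ modulo $\mathfrak{m}$, Teichm\"uller-lift it to characteristic zero, use it to build the candidate Hecke character on principal ideals, and finally adjust an arbitrary extension to $I(\mathfrak{m})$ so that the result reduces to $r$ modulo $\overline{\mathfrak{L}}$.

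First I would fix an embedding $\iota\colon \Qbar\hookrightarrow\overline{\Q}_\ell$; this pins down a prime $\overline{\mathfrak{L}}\mid\ell$ of $\Qbar$ lying over $\mathfrak{l}$, together with the Teichm\"uller identification between prime-to-$\ell$ roots of unity in $\overline{\F}_\ell^*$ and those in $\overline{\Z}_\ell^*$. Using $(H)(\text{i})$--$(\text{ii})$, I would regard $\alpha\bmod\mathfrak{l}$ as an element of $\F_{\ell^n}^*$ and define
$$
\eta\colon (\cO/\mathfrak{m})^*\longrightarrow \F_{\ell^n}^*,\qquad \eta(\alpha)=r((\alpha))\cdot(\alpha\bmod\mathfrak{l})^{-1}.
$$
The well-definedness of $\eta$ is where $(H)(\text{iv})$ enters: if $\alpha\equiv\beta\bmod\mathfrak{m}$ with both coprime to $\mathfrak{m}$, choose $\gamma$ with $\gamma\beta\equiv 1\bmod\mathfrak{m}$; then $(\gamma\alpha),(\gamma\beta)\in P_1(\mathfrak{m})\subset\ker r$, so $r((\alpha))=r((\beta))$, while $\alpha\equiv\beta\bmod\mathfrak{l}$ as $\mathfrak{l}\mid\mathfrak{m}$. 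On $\cO^*$ the formula reads $\eta(u)=u^{-1}$, and $(H)(\text{iii})$ ensures $|\cO^*|$ is coprime to $\ell$, so the Teichm\"uller lift $\tilde\eta\colon(\cO/\mathfrak{m})^*\to\Qbar^*$ still satisfies $\tilde\eta(u)=u^{-1}$. Consequently $\psi_0((\alpha)):=\alpha\,\tilde\eta(\alpha)$ yields a well-defined homomorphism $\psi_0\colon P(\mathfrak{m})\to\Qbar^*$ which by construction reduces to $r|_{P(\mathfrak{m})}$ modulo $\overline{\mathfrak{L}}$.

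Next I would extend $\psi_0$ to $I(\mathfrak{m})$ without destroying this congruence. Divisibility of $\Qbar^*$ lets me pick some extension $\chi_0\colon I(\mathfrak{m})\to\Qbar^*$. For any $\mathfrak{a}\in I(\mathfrak{m})$, $\mathfrak{a}$ is coprime to $\mathfrak{l}$ and $\chi_0(\mathfrak{a})^h=\alpha\,\tilde\eta(\alpha)$ (where $\mathfrak{a}^h=(\alpha)$, $h=|\operatorname{Cl}(K)|$) is an $\overline{\mathfrak{L}}$-adic unit, so $\chi_0(\mathfrak{a})\in\overline{\Z}_\ell^*$ and the reduction $\bar\chi_0\colon I(\mathfrak{m})\to\overline{\F}_\ell^*$ makes sense. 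The quotient $\bar\chi_0\cdot r^{-1}$ is trivial on $P(\mathfrak{m})$ and therefore descends to a character $\phi\colon\operatorname{Cl}(K)\to\overline{\F}_\ell^*$; its image consists of roots of unity of order prime to $\ell$, so it admits a Teichm\"uller lift $\tilde\phi\colon\operatorname{Cl}(K)\to\Qbar^*$. I would then take $\psi:=\chi_0\cdot(\tilde\phi^{-1}\circ\pi)$, where $\pi\colon I(\mathfrak{m})\to\operatorname{Cl}(K)$ is the projection; this $\psi$ is a Hecke character and satisfies $\psi\equiv r\bmod\overline{\mathfrak{L}}$ on every prime of $I(\mathfrak{m})$.

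The main obstacle is the construction of $\eta$: condition $(H)(\text{iv})$ is precisely what makes $r((\alpha))$ depend only on $\alpha\bmod\mathfrak{m}$, and condition $(H)(\text{iii})$ is what yields the unit compatibility $\tilde\eta(u)=u^{-1}$ needed for $\psi_0$ to be a bona fide homomorphism on principal ideals. Once these two compatibilities are in hand, the extension across the class group and the Teichm\"uller correction are essentially formal, thanks to divisibility of $\Qbar^*$ and the prime-to-$\ell$ nature of characters valued in $\overline{\F}_\ell^*$.
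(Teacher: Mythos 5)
Your proof is correct, and its first half coincides with the paper's argument: you build the Dirichlet character $\eta(\alpha)=r((\alpha))\,(\alpha\bmod\mathfrak{l})^{-1}$ from conditions (ii) and (iv) of $(H)$, Teichm\"uller-lift it, use condition (iii) to guarantee $\widetilde\eta(u)=u^{-1}$ on $\cO^*$, and set $\psi_0((\alpha))=\alpha\,\widetilde\eta(\alpha)$ on principal ideals --- exactly the paper's $\eta$, $\widetilde\eta$, and $\psi((\alpha))$. Where you genuinely diverge is the extension from principal ideals to all of $I(\mathfrak{m})$. The paper is constructive: it fixes generators $\mathfrak{p}_1,\dots,\mathfrak{p}_t$ of $\operatorname{Cl}(K)$ with $\mathfrak{p}_i^{e_i}=(\alpha_i)$, chooses $a_i\in\Qbar$ satisfying $a_i^{e_i}=\alpha_i\widetilde\eta(\alpha_i)$ and $a_i\equiv r(\mathfrak{p}_i)\bmod\overline{\mathfrak{L}}$ (possible because $x^{e_i}-\alpha_i\widetilde\eta(\alpha_i)$ reduces to $x^{e_i}-r(\mathfrak{p}_i)^{e_i}$), sets $\psi(\mathfrak{p}_i)=a_i$, and extends multiplicatively. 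You instead extend $\psi_0$ abstractly via divisibility (injectivity) of $\Qbar^*$, verify the values are $\overline{\mathfrak{L}}$-adic units by raising to the $h$-th power, and then correct by the Teichm\"uller lift of the class-group character $\bar\chi_0\, r^{-1}$. Your variant is arguably cleaner: the congruence on all of $I(\mathfrak{m})$ falls out by construction, and you avoid the consistency check (independence of the decomposition $\mathfrak{p}=\prod_i\mathfrak{p}_i^{n_i}\cdot(\alpha)$) that the paper handles only implicitly through the relations $a_i^{e_i}=\psi((\alpha_i))$. What the paper's route buys is explicitness of the values $\psi(\mathfrak{p}_i)$, which is precisely what the authors exploit when exhibiting concrete CM newforms in Section \ref{examples} (e.g.\ $\psi(\mathfrak{a})=\sqrt[117]{\gamma}\cdot\eta(\alpha)$ in the degree-$137$ example). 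One point you should state explicitly: the embedding $\iota\colon\Qbar\hookrightarrow\overline{\Q}_\ell$ must be chosen so that its restriction to $K$ induces the prime $\mathfrak{l}$ of condition (i), so that reduction modulo $\overline{\mathfrak{L}}$ on $\cO$ is compatible with reduction modulo $\mathfrak{l}$; this is the normalization the paper secures by taking $\overline{\mathfrak{L}}$ above $\mathfrak{l}$ in $M=K\cdot\Q(\zeta_{\ell^n-1})$.
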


\begin{proof}
Conditions (ii) and (iv) imply that the map
\[
\eta\colon (\mathcal{O}/\mathfrak{m})^*\longrightarrow \mathbb{F}_{\ell^n}^*, 
\quad 
\eta(\alpha)= r((\alpha))\,(\alpha \bmod \mathfrak{l})^{-1}
\]
is a well-defined character satisfying $\eta(u)=(u \bmod \mathfrak{l})^{-1}$ for all $u\in \mathcal{O}^*$.

Let $M=K\cdot \mathbb{Q}(\zeta_{\ell^n-1})$, and let $\overline{\mathfrak{L}}$ be a prime of $M$ above the prime $\mathfrak{l}$ of $K$ attached to $r$ by condition (i). 
Denote by $\mathfrak{L}=\overline{\mathfrak{L}} 
\cap \Z[\zeta_{\ell^n-1}]$ 
the corresponding prime ideal in the 
$\ell^n-1$\,th cyclotomic field.
Consider $\omega_{\mathfrak{L}}\colon\F_{\ell^n}^*\longrightarrow \Z[\zeta_{\ell^n-1}]^*$ to be a Teichmüller character attached to $\mathfrak{L}$; that is, it holds 
    $\omega_{\mathfrak{L}}(x)^{\ell^n}=\omega_{\mathfrak{L}}(x)$ and 
    $\omega_{\mathfrak{L}}(x) \bmod \mathfrak{L}$ is equal to $x$ for all $x$ in $\F_{\ell^n}^*$. 
Let $\widetilde \eta$ denote the Teichmüller lifting of 
the character $\eta$: 
    $$
\begin{tikzcd}
 &   \Z[\zeta_{\ell^n-1}]^*  \\
 &                   \\
 (\mathcal{O}/
 \mathfrak{m})^* \arrow{r}{\eta} \arrow{ruu}{\widetilde \eta}   & 
\F_{\ell^n}^* 
    \arrow[uu,"\ \omega_{\mathfrak{L}}",swap] \,.
\end{tikzcd}
$$
For $u\in \cO^*$, one has $\widetilde\eta (u)=1/u$ since $u$ is the unique unit in $\cO^*$ that reduces mod $\overline{\mathfrak{L}}$
to $u\pmod{\ell}$ due to the fact that $r$ satisfies condition (iii). Moreover, it holds 
$\widetilde\eta(\alpha)\equiv r((\alpha))\,
    \alpha^{-1} 
    \bmod \overline{\mathfrak{L}} $  for all $\alpha\in \mathcal{O}$ coprime
    with $\mathfrak{m}$.

Now, we consider generators of the class group
$
\operatorname{Cl}(K)=\langle \mathfrak{p}_1\rangle\times \dots \times
    \langle\mathfrak{p}_t\rangle
    $
    with $\mathfrak{p}_i$ in $I(\mathfrak{m})$. 
    Write $e_i=\operatorname{ord}(\mathfrak{p}_i)$,
    $\mathfrak{p}_i^{e_i}=(\alpha_i)$.
    Notice that 
for every $i=1,\dots,t$, the polynomial
$x^{e_i}-\alpha_i \widetilde\eta(\alpha_i)
\in \Qbar[x]$ reduced mod $\overline{\mathfrak{L}}$ is
$x^{e_i}-r((\alpha_i))=
x^{e_i}-r(\mathfrak{p}_i)^{e_i}
\in \F_{\ell^{n}}[x]$. 
We choose 
an element $a_i$ in $\Qbar$ such that
$a_i^{e_i}=\alpha_i\widetilde\eta(\alpha_i)$ and $a_i$ reduces to
$r(\mathfrak{p}_i)$ mod $\overline{\mathfrak{L}}$.
We define $\psi(\mathfrak{p}_i)=a_i$.
Also, for every $\alpha$ in $K$ coprime with $\mathfrak{m}$, we define
$\psi((\alpha)) = \alpha \widetilde\eta(\alpha)$.
For a prime  ideal $\mathfrak{p}$ in $I(\mathfrak{m})$,
    we can write:
    $$
    \mathfrak{p} = 
    \prod_{i=1}^t 
    \mathfrak{p}_i^{n_i}
    \cdot 
    (\alpha)\,,
    $$ 
    where $n_i\geq 1$, and $\alpha$ in $K$ with $(\alpha,\mathfrak{m})=1$. We define multiplicatively 
    $\psi$ over $I(\mathfrak{m})$
    and one has the congruence
    $$
    \psi(\mathfrak{p})=
    \prod_{i=1}^t 
    \psi(\mathfrak{p}_i)^{n_i}
    \cdot 
    \alpha \, \widetilde\eta(\alpha) \equiv
    \prod_{i=1}^t 
    \psi(\mathfrak{p}_i^{n_i})
    \cdot 
    r((\alpha))
    \equiv
    \prod_{i=1}^t 
    r(\mathfrak{p}_i^{n_i})
    \cdot 
    r((\alpha)) \equiv r(\mathfrak{p}) \bmod{\overline{\mathfrak{L}}} .
    $$
We can conclude that since $r$ is a morphism and satisfies (H), then 
$\psi$ is a Hecke character mod 
$\mathfrak{m}$ and 
$\widetilde\eta(\alpha)=\psi((\alpha))/\alpha$ is its associate Dirichlet character.
\end{proof}

\begin{remark}
If $r((p))=p\,\varepsilon(p)$ for all primes $p$ coprime with $\mathfrak{m}$, then the Nebentypus of the modular form attached to $\psi$ is trivial.
\end{remark}

\begin{theorem}
\label{teor}
Let $f\in S_2(\Gamma_0(N))^{\operatorname{new}}$ be a newform of CM type mod $\ell$ by $K$. Assume that $\ell>2$ and $\ell\neq 3$ when $K=\mathbb{Q}(\sqrt{-3})$. Then $f$ satisfies Conjecture~1.2 for some level $M$.
\end{theorem}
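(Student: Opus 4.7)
The plan is to run a three-step procedure: extract from the CM-type hypothesis a character $r\colon \Gal(\Qbar/K)\to \Fbar_\ell^*$ such that the residual representation $\varrho=\varrho_f$ attached to $f$ and $\mathfrak{L}$ satisfies $\varrho^{\sss}\simeq \operatorname{Ind}_K^\Q(r)$; verify that (after enlarging $n$ and $\mathfrak{m}$) the character $r$ satisfies the Hecke property $(H)$ of Section~3; and then invoke Theorem \ref{deformation} to lift $r$ to a Hecke character $\psi$ whose associated theta series provides the desired weight~$2$ CM newform $g$ congruent to $f$ modulo $\mathfrak{L}$.

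First, I would establish the monomial structure of $\varrho$. By Chebotarev density, the hypothesis $a_p\equiv 0\pmod{\mathfrak{L}}$ for every inert prime $p\nmid N\ell$ upgrades to $\operatorname{tr}\varrho(\sigma)=0$ for all $\sigma\in\Gal(\Qbar/\Q)\setminus\Gal(\Qbar/K)$. Since $\ell>2$, a standard Clifford-theoretic argument applied to the semisimplification forces $\varrho^{\sss}\simeq \operatorname{Ind}_K^\Q(r)$ for some character $r$: in the irreducible case this is immediate from trace vanishing on the nontrivial coset, and in the reducible case one finds $\varrho^{\sss}=\chi\oplus \chi\varepsilon$ with $\varepsilon$ the quadratic character of $K/\Q$, which is also $\operatorname{Ind}_K^\Q(\chi|_{\Gal(\Qbar/K)})$ by Mackey. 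Since $\varrho$ is unramified outside $N\ell$, the character $r$ is unramified outside primes of $K$ above $N\ell$.

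Next, I would check property $(H)$ for $r$. Since $r$ has finite order, its kernel corresponds by class field theory to a congruence subgroup, so condition (iv) holds for a suitable conductor $\mathfrak{m}$. Conditions (i) and (ii) are arranged by the trick of replacing $n$ by $2n$ and $\mathfrak{m}$ by $\mathfrak{m}\mathfrak{l}$, as already noted after the definition. Condition (iii) is precisely the running hypothesis $\ell>2$, together with $\ell\neq 3$ when $K=\Q(\sqrt{-3})$, since these exclude exactly the pairs $(\ell,K)$ for which $\cO^*$ contains an element of order $\ell$. Theorem \ref{deformation} then produces a Hecke character $\psi\colon I(\mathfrak{m})\to\Qbar^*$ with $\psi(\mathfrak{p})\equiv r(\mathfrak{p})\pmod{\overline{\mathfrak{L}}}$ for all prime ideals $\mathfrak{p}\in I(\mathfrak{m})$.

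Finally, the theta series attached to $\psi$ is a cuspidal CM form of weight~$2$ because $\psi$ has infinity type $(1,0)$ by construction. To verify that its Nebentypus is trivial, I would invoke the remark after Theorem~\ref{deformation}: combining $\det\varrho(\Frob_p)\equiv p\pmod{\mathfrak{L}}$ with the standard determinant formula for $\operatorname{Ind}_K^\Q(r)$ gives $r((p))\equiv p\,\varepsilon(p)\pmod{\mathfrak{L}}$ for all primes $p$ coprime with $\mathfrak{m}$ (the sign $-1$ appearing in the inert case exactly matches $\varepsilon(p)=-1$). Passing to an associated newform $g\in S_2(\Gamma_0(M))^{\operatorname{new}}$ with CM by $K$, the congruence $a_p\equiv b_p\pmod{\overline{\mathfrak{L}}}$ for $p\nmid NM\ell$ is then immediate: at split primes $p=\mathfrak{p}\overline{\mathfrak{p}}$ both sides reduce to $r(\mathfrak{p})+r(\overline{\mathfrak{p}})$, while at inert primes both vanish. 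The main obstacle is the first step—recognizing $\varrho^{\sss}$ as monomially induced from $K$—which is where the CM-type hypothesis is really used; once this is in place, the lifting theorem of Section~3 and the Nebentypus determinant calculation are essentially forced.
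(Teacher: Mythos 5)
Your proposal is correct, and its skeleton is the same as the paper's: realize $\varrho^{\sss}$ as $\operatorname{Ind}_K^\Q(r)$, check property $(H)$ for $r$, apply Theorem \ref{deformation}, and compare traces at split and inert primes to get the congruence. The one genuine difference is the first step: the paper obtains the monomial structure by citing Lemma 6.7 of \cite{BP}, whereas you prove it inline --- Chebotarev upgrades the hypothesis to $\operatorname{tr}\varrho(\sigma)=0$ on $\Gal(\Qbar/\Q)\setminus\Gal(\Qbar/K)$; then (since determinants are untouched by the quadratic twist) Brauer--Nesbitt gives $\varrho^{\sss}\simeq\varrho^{\sss}\otimes\varepsilon$, and for $\ell>2$ the standard intertwiner argument yields induction in the absolutely irreducible case, while in the reducible case the trace identity forces $\varrho^{\sss}=\chi\oplus\chi\varepsilon=\operatorname{Ind}_K^\Q\bigl(\chi|_{\Gal(\Qbar/K)}\bigr)$. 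This makes the proof self-contained where the paper's citation buys brevity. You are also more careful than the paper on a point its proof leaves implicit: Conjecture \ref{conj} demands that $g$ have trivial Nebentypus (it must lie in $S_2(\Gamma_0(M))^{\operatorname{new}}$), and your computation $r((p))\equiv p\,\varepsilon(p)$ from $\det\varrho(\Frob_p)=p$, fed into the remark following Theorem \ref{deformation}, supplies exactly that verification. The remaining steps --- the congruence-subgroup property of $\ker r$ via class field theory, the doubling trick for conditions (i)--(ii), the identification of condition (iii) with the excluded pairs $(\ell,K)$, and cuspidality of the weight-$2$ theta series --- match the paper's argument.
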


\begin{proof} Let 
$
\varrho\colon 
\Gal(\Qbar/\Q)\longrightarrow \GL_2(\overline{\F}_\ell)$ be the Galois representation
attached to $f$ and the corresponding 
prime ideal $\mathfrak{L}\mid \ell $ of its Fourier number field. Let $\rho^{\sss}$ be the semisimplification of $\rho$. By Lemma 6.7 in \cite{BP}, $\rho^{\sss}$ is the induced representation of a character $r\colon \Gal(\Qbar/K)\longrightarrow \F_{\ell^n}^*$.
    Let $\mathfrak{f}$ be the conductor of the abelian
    extension $\Qbar^{\ker r}/K$. By Theorems 8.3 and 8.5 in \cite{Cox}, 
    for every $\mathfrak{m}$ such that $\mathfrak{f}\mid \mathfrak{m}$, the character $r$ can be viewed as a morphism $r\colon I(\mathfrak{m}) \longrightarrow \F_{\ell^{n}}^*$ whose kernel is a congruence subgroup. Under the hypothesis on $\ell$, $r$ satisfies the property (H)
    with $\mathfrak{m}=\mathfrak{f}$ or $\mathfrak{m}=
    \ell \mathfrak{f}$ (in case that 
    $\ell$  and  $\mathfrak{f}$ are coprime).
    The Hecke character $\psi$ attached to $r$ by Theorem \ref{deformation} 
    yields a CM cuspidal eigenform. Its associated newform provides the required congruences with $f$ since $\rho$ and $\rho^{\sss}$ have the same traces and determinants.
\end{proof}

\begin{remark}
As we shall see in Section~\ref{examples},
the data of $r$, its conductor $\mathfrak{f}$, and the conductor of the associated Dirichlet character allow one to determine explicitly the level $M$ of the CM newform congruent to $f$. We conjecture that $M$ divides $N\ell$ for $\ell>2$.
\end{remark}

\begin{remark}
An earlier version of the above Theorem \ref{teor}  required stronger assumptions. We thank 	Aurel Page for pointing out Lemma 6.7 in his preprint \cite{BP}, which not only simplified our requirements on $f$  but also yielded the following generalization whose proof follows the same arguments as in Theorem \ref{teor}.  
\end{remark}

\begin{theorem} \label{thm37}
Let $\ell>2$ and assume $\ell\neq 3$ when $K=\mathbb{Q}(\sqrt{-3})$.  
Let $r\colon \Gal(\overline{\mathbb{Q}}/K)\longrightarrow \mathbb{F}_{\ell^n}^*$ be a linear character. Then the induced representation
$
\varrho=\operatorname{Ind}_K^{\mathbb{Q}}(r)
$
arises from a weight~2 newform with CM by $K$.
\end{theorem}
\section{Abelian surfaces with extra endomorphisms}
\label{many_endo}

Let $A/\Q$ be a modular abelian surface attached to a newform 
$f\in S_2(\Gamma_0(N))^{\operatorname{new}}$ without complex multiplication. 
Let $E=\Q(\sqrt{m})$ be the Fourier number field of $f$, with $m$ square-free. 
By \cite{ribet1992taejon}, 
the abelian variety $A$ has extra endomorphisms (that is, 
$\operatorname{End}_\Q(A)\subsetneq \operatorname{End}_{\overline{\Q}}(A)$) 
if and only if $f$ admits a quadratic inner twist:
$$
f^\sigma = f\otimes \chi_\sigma,
$$
where $\sigma$ denotes the non-trivial automorphism of $E$ and 
$\chi_\sigma$ is the quadratic character attached to a quadratic number field 
$K=\Q(\sqrt{\delta})$, for some square-free~$\delta$. 
There are two possibilities for $A$:

\begin{itemize}
    \item[(i)] $A/\overline{\Q}$ is simple. This occurs if and only if $m$ is not a norm from $K$. In this case, $K$ is an imaginary quadratic field and $\operatorname{End}_{\overline{\Q}}(A)$ is an order in the indefinite quaternion algebra $B(m,\delta)_\Q$, whose discriminant $D$ is a multiple of $m$ (cf. Theorem~1.4 in \cite{BFGR}).

    \item[(ii)] $A/\overline{\Q}$ is not simple. In this case, $A$ is isogenous to the Weil restriction of a $\Q$-curve $\mathcal{E}$ completely defined over the quadratic (real or imaginary) field $K$. That is, the elliptic curve $\mathcal{E}$ is isogenous to its Galois conjugate over $K$.
\end{itemize}

\noindent In both cases, all endomorphisms of $A$ are defined over $K$. The Fourier coefficients of the normalized $q$-expansion 
$f(q)=\sum_{n=1}^\infty a_n q^n$ satisfy:
$$
\begin{array}{ll}
a_p \in \Z & \text{if $p$ splits in $K$,}\\[4pt]
a_p=c_p\sqrt{m},\ \ c_p\in \Z 
& \text{if $p$ is inert in $K$.}
\end{array}
$$

\begin{definition}
A prime $\ell$ is called \emph{exceptional} for the newform $f$ if $\ell \mid d\,m$, 
where $d\geq 1$ is determined by the relation 
$\Z[d\sqrt{m}]=\Z[\{a_n\}]$. 
(Note that $dm$ is not necessarily the discriminant of the order $\Z[\{a_n\}]$.) 
\end{definition}

\noindent Observe that for every exceptional prime $\ell$ for $f$, one has 
$$
a_p \equiv 0 \pmod{\ell}
$$
for all primes $p\nmid \ell N$ that are inert in $K$. 
Therefore, when $K$ is an imaginary quadratic field, 
the newform $f$ is of CM type modulo~$\ell$. 

The following result describes a restriction on $K$ when the Galois representation attached to $f$ at an exceptional prime $\ell$ is monomial but not dihedral.

\begin{proposition}
Let $f$, $E$, and $K$ be as above. Let $\ell>2$ be an exceptional prime for $f$. Assume that the projective semisimplification of the Galois representation $\varrho$ attached to $f$ and a prime $\mathfrak{L}\mid \ell$ of $E$ is not dihedral. Then $K$ is the unique quadratic subfield of the cyclotomic field $\Q(\zeta_\ell)$.
\end{proposition}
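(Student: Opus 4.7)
The plan is to exploit the non-dihedral hypothesis to force $\varrho^{ss}$ to be reducible, extract from it a character of $\Gal(\Qbar/K)$ whose square is $\chi_\ell$ restricted to $\Gal(\Qbar/K)$ and which takes values in $\F_\ell^*$, and then conclude by quadratic reciprocity that $K$ is the unique quadratic subfield of $\Q(\zeta_\ell)$.

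First, because $f$ is of CM type modulo $\ell$ by $K$, I would invoke Lemma 6.7 of [BP] (already applied in Theorem \ref{teor}) to write $\varrho^{ss} \cong \operatorname{Ind}_K^\Q(r)$ for some character $r\colon \Gal(\Qbar/K) \to \Fbar_\ell^*$. An induced representation of this shape is irreducible with dihedral projective image if and only if $r\neq r^c$; under the non-dihedral hypothesis we must therefore have $r=r^c$, so $r$ extends to $\Gal(\Qbar/\Q)$ and
$$
\varrho^{ss}=\xi_1\oplus\xi_2,\qquad \xi_2=\xi_1\cdot\varepsilon_K,\qquad \xi_1|_{\Gal(\Qbar/K)}=\xi_2|_{\Gal(\Qbar/K)}=r,
$$
where $\varepsilon_K$ is the quadratic character attached to $K/\Q$. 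Combining with $\det\varrho=\chi_\ell$ gives $\xi_1^2\varepsilon_K=\chi_\ell$; setting $\chi:=\xi_1|_{\Gal(\Qbar/K)}$ yields the key identity $\chi^2=\chi_\ell|_{\Gal(\Qbar/K)}$.

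Next I would show $\chi$ actually takes values in $\F_\ell^*$. For every prime $p\nmid N\ell$ split in $K$ we have $\varepsilon_K(\Frob_p)=1$, hence $a_p\equiv 2\xi_1(\Frob_p)\pmod{\mathfrak{L}}$; since such $a_p$ lie in $\Z$ (by the description of the Fourier coefficients at the start of Section \ref{many_endo}) and $\ell>2$, this forces $\chi(\Frob_\mathfrak{p})\equiv a_p/2\in\F_\ell^*$ for each prime $\mathfrak{p}$ of $K$ above $p$. Because $r=r^c$, the extension $L:=\Qbar^{\ker\chi}$ is Galois over $\Q$ with $\Gal(L/\Q)\cong\Gal(L/K)\times C_2$, and Chebotarev applied to this split extension shows that the Frobenius elements at primes $\mathfrak{p}$ above split $p$ already exhaust $\Gal(L/K)$; hence $\chi(\Gal(\Qbar/K))\subseteq\F_\ell^*$.

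The conclusion then follows from $\chi(\Frob_\mathfrak{p})^2\equiv \operatorname{N}(\mathfrak{p})\pmod\ell$, which shows $\operatorname{N}(\mathfrak{p})\in(\F_\ell^*)^2$ for every unramified $\mathfrak{p}$ of $K$. At primes above inert $p$ this is automatic ($\operatorname{N}(\mathfrak{p})=p^2$); at primes above split $p$ it amounts to $(p/\ell)=1$, which by quadratic reciprocity is equivalent to $(\ell^*/p)=1$ with $\ell^*:=(-1)^{(\ell-1)/2}\ell$, i.e.\ to $\mathfrak{p}$ splitting in $K(\sqrt{\ell^*})/K$. Chebotarev then forces $K(\sqrt{\ell^*})=K$, so $\Q(\sqrt{\ell^*})\subseteq K$, and a degree count gives $K=\Q(\sqrt{\ell^*})$. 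The step I expect to be most delicate is the intermediate one, namely justifying via Chebotarev that $\chi$ really takes values in $\F_\ell^*$ rather than in a larger residue field; the reduction to reducibility and the closing quadratic-reciprocity argument are each routine.
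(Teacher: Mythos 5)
Your proposal is correct and follows essentially the same route as the paper's proof: the non-dihedral hypothesis forces $r=\overline{r}$ (so $\varrho^{\sss}$ is reducible of the form $\xi_1\oplus\xi_1\varepsilon_K$), integrality of $a_p$ at split primes then gives $a_p^2\equiv 4p \pmod{\ell}$, hence $(p/\ell)=1$, and quadratic reciprocity plus a density argument identifies $K$ with $\Q(\sqrt{\ell^*})$. The only divergence is your intermediate Chebotarev step showing that $\chi$ takes values in $\F_\ell^*$ on all of $\Gal(\Qbar/K)$ --- the step you flag as most delicate, and whose direct-product claim $\Gal(L/\Q)\cong\Gal(L/K)\times C_2$ can fail when $K$ lies in the fixed field of $\ker\xi_1$ --- but it is in fact superfluous, since the closing argument only uses split primes, where $\chi(\Frob_{\mathfrak{p}})\equiv a_p/2\in\F_\ell$ is already immediate.
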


\begin{proof}
We claim that $K=\Q(\sqrt{\ell^*})$, where $\ell^*=(-1/\ell)\,\ell$ and 
$(-1/\ell)$ denotes the Legendre symbol. 
We know that $\varrho^{\mathrm{ss}}$ is isomorphic to $\operatorname{Ind}_K^\Q(r)$ for a linear character 
$r\colon \Gal(\overline{\Q}/K)\to \overline{\F}_\ell^*$. 
Since the projective representation of $\varrho^{\mathrm{ss}}$ is not dihedral, 
it follows from \cite{Nua} that $r=\overline{r}$. 
This implies that $a_p^2 \equiv 4p \pmod{\ell}$ for all primes $p\nmid N$ that split in $K$. Hence the Legendre symbol $(p/\ell)$ is trivial for such primes. 
If $\ell \equiv 1 \pmod{4}$, then $(\ell/p)=1$ for almost all $p$, which implies that $K=\Q(\sqrt{\ell})$. 
If $\ell \equiv 3 \pmod{4}$, then $(-\ell/p)=1$ for almost all $p$, and hence $K=\Q(\sqrt{-\ell})$. 
\end{proof}

\section{Isogeny characters and QM-abelian surfaces}

 In this Section, suppose that the modular abelian surface $A/\Q$ attached to  the newform $f$ has  quaternionic multiplication. As before, $E=\Q(\sqrt{m})$ is the Fourier number field of $f$, $K=\Q(\sqrt{\delta})$ is a quadratic imaginary field,
 and $m$ divides the discriminant~$D$ of the quaternion algebra
 $B(m,\delta)_\Q$. 

 As a consequence of the discussion in Section 4 in \cite{Jordan}, we get the following result.
 
 \begin{proposition}
 \label{Jordan}
  Assume that $\End_K(A)$ is a maximal order in the quaternion algebra $ \End_K^0(A)=\End_K(A)\otimes\Q$. If a prime $\ell\mid D$ is exceptional for $f$, then there is a linear character $r\colon G_K\longrightarrow \F_{\ell^2}^*$ such that the induced representation $\operatorname{Ind}_{K}^{\Q} r$ is isomorphic to the representation
 $\varrho$ attached to  $f$  over  a prime $\mathfrak{L}\mid\ell$ of the Fourier number field of $f$.
 \end{proposition}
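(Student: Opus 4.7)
The plan is to realize $r$ as the isogeny character by which $G_K$ acts on the kernel subgroup scheme of the unique two-sided maximal ideal of $\End_K(A)$ above $\ell$. First I would fix the local picture at $\ell$: because $\ell$ divides the quaternion discriminant $D$, the localization $B\otimes_\Q\Q_\ell$ is the unique quaternion division algebra over $\Q_\ell$, and the maximality of $\End_K(A)$ forces $\mathcal{O}_\ell := \End_K(A)\otimes_\Z\Z_\ell$ to be its unique maximal order. Standard local theory then supplies a unique two-sided maximal ideal $\mathcal{P}_\ell$ with $\mathcal{P}_\ell^2=\ell\,\mathcal{O}_\ell$ and residue field $\mathcal{O}_\ell/\mathcal{P}_\ell\cong\F_{\ell^2}$; globally this yields a two-sided maximal ideal $\mathcal{P}\subset\End_K(A)$ above $\ell$. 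The character $r$ is then extracted from the $G_K$-action on $A[\mathcal{P}] := \bigcap_{\alpha\in\mathcal{P}}\ker(\alpha\colon A\to A)$: since $\mathcal{P}$ consists of $K$-rational endomorphisms, $A[\mathcal{P}]$ is $G_K$-stable, and using that the Tate module $T_\ell A$ is free of rank one over $\mathcal{O}_\ell$ for a QM surface with maximal endomorphism ring, $A[\mathcal{P}]$ becomes a one-dimensional $\F_{\ell^2}$-vector space via the residue map. Because $\mathcal{P}$ is two-sided, this $\F_{\ell^2}$-structure commutes with the $G_K$-action, yielding the desired linear character $r\colon G_K\to\F_{\ell^2}^*$.

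Next I would verify $\operatorname{Ind}_K^\Q r\cong\varrho$ via a Chebotarev argument, matching characteristic polynomials of Frobenius at primes $p\nmid\ell N$. When $p=\mathfrak{p}\overline{\mathfrak{p}}$ splits in $K$, the trace of the induced representation at $\Frob_p$ is $r(\Frob_\mathfrak{p})+r(\Frob_{\overline{\mathfrak{p}}})$, which I would match with $a_p\bmod\mathfrak{L}$ by decomposing $A[\mathfrak{L}]\otimes_{\F_\ell}\F_{\ell^2}$ into the eigenspaces of $\mathcal{P}$ and its $\Gal(K/\Q)$-conjugate. When $p$ is inert in $K$, the induced trace vanishes automatically, while the assumption that $\ell$ is exceptional forces $a_p\equiv 0\bmod\mathfrak{L}$. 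Determinants agree on both sides with the mod-$\ell$ cyclotomic character. Brauer--Nesbitt then yields $(\operatorname{Ind}_K^\Q r)^{\sss}\cong\varrho^{\sss}$, and irreducibility of $\operatorname{Ind}_K^\Q r$---equivalent to $r\neq r^\tau$ and detected by the existence of a split prime at which $a_p\not\equiv 0\bmod\mathfrak{L}$---upgrades this to the claimed isomorphism.

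The main obstacle is the identification $A[\mathfrak{L}]\otimes_{\F_\ell}\F_{\ell^2}\simeq A[\mathcal{P}]\oplus A[\mathcal{P}^\tau]$ of $G_K$-modules that underpins the trace comparison. The subtlety is that $\mathfrak{L}$ sits in the commutative order $\End_\Q(A)=\Z[d\sqrt{m}]$ with residue field $\F_\ell$, whereas $\mathcal{P}$ sits in the non-commutative maximal quaternion order with residue field $\F_{\ell^2}$. The bridge between these pictures is the arithmetic coincidence $\sqrt{m}\in\mathcal{P}_\ell$, forced by $\ell\mid m$ together with the fact that $\mathcal{P}_\ell$ is the Jacobson radical of $\mathcal{O}_\ell$; this ensures that multiplication by $\sqrt{m}$ kills $A[\mathcal{P}]$ and places $A[\mathfrak{L}]$ inside the span of $A[\mathcal{P}]$ and $A[\mathcal{P}^\tau]$, after which a dimension count forces equality. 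This identification, together with the explicit description of isogeny characters for QM abelian surfaces, is precisely the content of Section~4 of \cite{Jordan} invoked by the statement.
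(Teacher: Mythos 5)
Your overall route is the same as the paper's: you construct $r$ as Jordan's isogeny character, namely the $G_K$-action on the kernel of the two-sided maximal ideal $\mathcal{P}$ of $\End_K(A)$ above $\ell\mid D$ (the paper simply cites Proposition 4.4 of \cite{Jordan} for this; you re-derive it from the local structure of the quaternion order and freeness of $T_\ell A$), and you then compare Frobenius data to identify $\operatorname{Ind}_K^{\Q}r$ with $\varrho$. The construction of $r$ itself is sound. The gap is in the comparison step, precisely where the paper invokes Proposition 4.6 of \cite{Jordan}.

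First, a structural error: since $B\otimes\Q_\ell$ is division, $\mathcal{P}$ is the \emph{unique} two-sided maximal ideal of $\End_K(A)$ above $\ell$, and $\Gal(K/\Q)$ acts on $\End_K(A)$ fixing $\ell$, so uniqueness forces $\mathcal{P}^\tau=\mathcal{P}$; your key identification $A[\mathfrak{L}]\otimes_{\F_\ell}\F_{\ell^2}\simeq A[\mathcal{P}]\oplus A[\mathcal{P}^\tau]$ is therefore vacuous as written (indeed $A[\mathcal{P}]$ is already $G_\Q$-stable). What the module structure actually gives is the decomposition of $A[\mathcal{P}]\otimes_{\F_\ell}\F_{\ell^2}$ into eigenspaces for the two embeddings of the residue field $\End_K(A)/\mathcal{P}\simeq\F_{\ell^2}$, i.e. $r\oplus r^{(\ell)}$ with $r^{(\ell)}$ the Frobenius twist; at a split prime this yields $a_p\equiv r(\Frob_{\mathfrak{p}})+r(\Frob_{\mathfrak{p}})^{\ell}\pmod{\overline{\mathfrak{L}}}$. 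But the trace of $\operatorname{Ind}_K^{\Q}r$ at $\Frob_p$ is $r(\Frob_{\mathfrak{p}})+r(\Frob_{\overline{\mathfrak{p}}})$, so Brauer--Nesbitt requires the identity $r(\Frob_{\overline{\mathfrak{p}}})=r(\Frob_{\mathfrak{p}})^{\ell}$, i.e. that $\Gal(K/\Q)$-conjugation acts on $r$ as the $\ell$-power Frobenius. This decisive input appears nowhere in your sketch; it is exactly what the paper gets from Jordan's norm relation $\operatorname{Norm}_{\F_{\ell^2}/\F_\ell}(r(\Frob_{\mathfrak{p}}))=\operatorname{Norm}(\mathfrak{p})$ (Proposition 4.6 of \cite{Jordan}), which also justifies your unproved assertion that $\det\operatorname{Ind}_K^{\Q}r$ is cyclotomic. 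Conceptually it holds because $\Gal(K/\Q)$ acts on $\End_K(A)$ by conjugation by $T=\sqrt{m}$ (Skolem--Noether), and conjugation by a uniformizer of the local division algebra induces the Frobenius on its residue field --- note this, like your bridge $\sqrt{m}\in\mathcal{P}_\ell$, uses $\ell\mid m$, whereas ``exceptional'' only guarantees $\ell\mid dm$. A last small slip: irreducibility of $\operatorname{Ind}_K^{\Q}r$, equivalent to $r\neq r^\tau$, is detected by a split prime with $a_p^2\not\equiv 4p\pmod{\mathfrak{L}}$, not by one with $a_p\not\equiv 0$.
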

 
  \begin{proof}
      By Proposition 4.4 in \cite{Jordan}, if $\ell\mid D$ then there is a subgroup $C$ of $A[\ell]$ of order $\ell^2$ that is rational over $K$ and  $\End_K(A)$-stable. The group $C$ is called the canonical subgroup of $A$ at $\ell$ and  the Galois character  $r\colon G_K\rightarrow \F_{\ell^2}^*$ associated  with $C$ is called the isogeny character at $\ell$.
            In  Proposition~4.6 \cite{Jordan}, it is proved that $\operatorname{Norm}_{\F_{\ell^2}/\F_{\ell} }(r (\Frob_\mathfrak{p}))=\operatorname{Norm}( \mathfrak{p})$
      for every prime $\mathfrak{p}$. Hence, the determinant of the  induced representation $\operatorname{Ind}_{K}^{\Q} r$ is the cyclotomic character and, for every prime $p\nmid N\,\ell$, the trace of $\operatorname{Ind}_{K}^{\Q} r(\Frob_p)$ is $a_p\pmod \ell$ when $p$ splits in $K$ and $0$
when $p$ is inert in $K$. Thus, 
since $\ell$ is exceptional,  $\operatorname{Ind}_{K}^{\Q} r$ and $\varrho$ are isomorphic. \end{proof}

\begin{remark}
The linear character $r$
from Proposition \ref{Jordan} is called isogeny character in \cite{Jordan}. Indeed, due to the fact that $\End_K(A)$ is a maximal order, it has a unique bilateral ideal of norm $\ell$ such that its square is the ideal generated by $\ell$. Since the quaternion algebra is indefinite the ideal is principal and, hence, there is an isogeny $\varphi\colon A\longrightarrow A$ of degree $\ell^2$ whose kernel is $C$.
\end{remark}

\begin{remark}
Observe that the quaternionic surfaces  $A/K$ treated in \cite{Jordan}
are not necessarily modular. Nevertheless, for $\ell>2$ or 
$\ell\neq 3$ when $K=\Q(\sqrt{-3})$, by Theorem \ref{thm37}
we have that $\operatorname{Ind}_{K}^{\Q} r$ arises form a newform of weight 2 with CM by $K$ and, moreover,  it has trivial Nebentypus.
\end{remark}

Now,  under the modularity assumption on $A$, we want to show that we can get an isogeny character at $\ell$ relaxing  the condition on the order of the 
endomorphism ring of $A$ to be maximal.

First, we fix the following notation. According to \cite{ribet1992taejon}, we can consider $T,S\in \End_{K}^0(A)$  whose action on $\langle f,  f^\sigma\rangle$ is as follows:
  $$ T:(f,f^\sigma)\mapsto (\sqrt{m} f,-\sqrt{m} f^\sigma) \,,\,\,\quad S:(f,f^\sigma)\mapsto (\sqrt{\delta} f^\sigma,\sqrt{\delta} f).$$
It holds
$T^2=m$, $S^2=\delta$, $T\cdot S=-S\cdot T$, $\overline T=T$ and $\overline S=-S$.
We denote by $n_T$, resp. $n_S$, the minimum positive integer such that $n_TT$, resp. $n_SS$,  belongs to the ring $\End_K(A)$.

The following result 
will be useful
for our purpose. 
We are grateful to Santi Molina for the localizing argument that makes its proof immediate.

\begin{lemma} 
\label{Santi} 
Let $\cB$ be an indefinite   quaternion algebra   over $ \Q$ of discriminant $D$ and let  $L$ be a quadratic field. If $L$ is contained in $\cB$ or $\cB$  is contained in the matrix algebra  $\operatorname{M}_2(L)$, then every   prime $\ell\mid D$ does not split in $L$.
\end{lemma}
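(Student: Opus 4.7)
The plan is to localize at $\ell$ and exploit the fact that, for every prime $\ell \mid D$, the algebra $\cB_\ell := \cB\otimes_\Q \Q_\ell$ is the unique quaternion division algebra over $\Q_\ell$, whereas $L_\ell := L\otimes_\Q \Q_\ell \cong \Q_\ell\times \Q_\ell$ precisely when $\ell$ splits in $L$. Arguing by contradiction, I would assume $\ell$ splits in $L$, so that $L_\ell$ carries a nontrivial pair of orthogonal idempotents summing to $1$, hence in particular has zero divisors.

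For the case $L\subset \cB$, tensoring the inclusion with $\Q_\ell$ yields an embedding $L_\ell\hookrightarrow \cB_\ell$. The images of the two idempotents would be nonzero orthogonal idempotents in $\cB_\ell$, but in a division algebra the only idempotents are $0$ and $1$; this is already a contradiction.

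For the case $\cB\subset \operatorname{M}_2(L)$, the same tensoring produces an embedding
\[
\cB_\ell \hookrightarrow \operatorname{M}_2(L_\ell) \cong \operatorname{M}_2(\Q_\ell)\times \operatorname{M}_2(\Q_\ell).
\]
Since $\cB_\ell$ is a simple $\Q_\ell$-algebra, the kernel of each of the two projections $\pi_i\colon \cB_\ell\to \operatorname{M}_2(\Q_\ell)$ is either $0$ or all of $\cB_\ell$. As the joint map is injective, at least one $\pi_i$ must be injective; a dimension count (both sides have $\Q_\ell$-dimension $4$) then forces $\pi_i$ to be an isomorphism $\cB_\ell\cong \operatorname{M}_2(\Q_\ell)$, contradicting the fact that $\cB_\ell$ is a division algebra.

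The only step that requires a moment of thought is the simplicity-plus-dimension argument in the second case; the rest is a routine local-global passage, which is presumably what Molina's localization remark is pointing at. No global tools beyond the classification of quaternion algebras over $\Q_\ell$ appear to be necessary.
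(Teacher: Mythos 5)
Your proof is correct and is precisely the ``localizing argument'' the paper alludes to (the paper gives no written proof, only crediting S.~Molina for the observation that localization at $\ell$ makes the lemma immediate): since $\ell\mid D$ forces $\cB\otimes_\Q\Q_\ell$ to be the division quaternion algebra while $\ell$ split in $L$ forces $L\otimes_\Q\Q_\ell\cong\Q_\ell\times\Q_\ell$, both cases collapse by your idempotent and simplicity-plus-dimension arguments. Nothing is missing; your write-up simply makes explicit what the paper leaves implicit.
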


Since $A$ is a QM-abelian variety and
$K\subseteq \End^0(A)$, 
as a consequence of the above lemma
we can restrict ourselves to consider the cases $\ell\mid D$ where 
$\ell$ either ramifies  
or it is inert in  $K$ ($\ell\mid m$ in this second case).

\begin{proposition}
\label{n_t_n_s}
Let   $\ell$ be an inert prime in $K$ such that $\ell\mid m$ and $\ell \nmid n_T\cdot n_S$. If $\delta \not\equiv 1 \pmod 4$ or $\ell \neq 2$, the representation $\varrho$ attached to $f$ at $\ell$  is
the induced  representation of a linear  character $r\colon G_K\longrightarrow \F_{\ell^2}^*$.

\end{proposition}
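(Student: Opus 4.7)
The plan is to construct a $G_K$-stable line in $A[\mathfrak{L}]$ carrying a natural $\F_{\ell^2}$-structure, using the endomorphisms $\tau=n_T T$ and $\sigma=n_S S$ in the role played by the isogeny $\pi$ in Jordan's argument. Set $\tau=n_TT$ and $\sigma=n_SS$; since $\overline T=T$, Galois descent gives $\tau\in\End_\Q(A)$, and $\tau^2=n_T^2 m$, $\sigma^2=n_S^2\delta$, $\tau\sigma=-\sigma\tau$. The hypothesis $\ell\nmid n_T n_S$ makes $n_T,n_S$ units modulo~$\ell$, and the hypotheses "$\ell$ inert in $K$" together with "$\delta\not\equiv 1\pmod 4$ or $\ell\neq 2$" force $\ell$ to be odd: for if $\ell=2$ were inert in $\Q(\sqrt{\delta})$, then $\delta\equiv 5\pmod 8$, contradicting $\delta\not\equiv 1\pmod 4$. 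So in fact we may assume $\ell$ odd.

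Next I would analyse $\tau$ on $A[\ell]$. Since $m$ is square-free and $\ell\mid m$, we have $\tau^2=n_T^2 m=\ell\,u$ with $u\in\Z_\ell^{\ast}$; hence $\ell$ ramifies in $E$ and $\tau$ is a uniformizer of $\End_\Q(A)\otimes\Z_\ell=\Z_\ell[\sqrt m]=\cO_E\otimes\Z_\ell$ (the last equality because $\ell\nmid n_T=d$ and $\ell$ is odd). Because $A$ is of $\GL_2$-type, the Tate module $T_\ell A$ is a free rank-two module over $\Z_\ell[\sqrt m]$, so reducing modulo $\ell$ gives
\[
A[\ell]\cong\bigl(\F_\ell[\epsilon]/\epsilon^{2}\bigr)^{2}, \qquad A[\mathfrak{L}]=\ker(\tau|_{A[\ell]})\cong\F_\ell^{\,2},
\]
where $\epsilon$ is the image of $\sqrt m$. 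In particular $A[\mathfrak{L}]=A[\ell]\cap\ker\tau$ has order $\ell^{2}$ and carries the representation $\varrho$.

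Now I would use $\sigma$ to put an $\F_{\ell^2}$-structure on $C:=A[\mathfrak{L}]$. The relation $\tau\sigma=-\sigma\tau$ implies $\sigma(C)\subseteq C$. Since $\ell\nmid n_S$ and $\delta$ is a non-square in $\F_\ell^{\ast}$ (because $\ell$ is odd and inert in $K$), the polynomial $X^{2}-n_S^{2}\delta$ is irreducible over $\F_\ell$, so $\F_\ell[\sigma|_C]\cong\F_{\ell^{2}}$. Because $\dim_{\F_\ell} C=2$, this makes $C$ a one-dimensional $\F_{\ell^{2}}$-vector space. The absolute Galois group $G_K$ commutes with $\sigma\in\End_K(A)$, so its action on $C$ is $\F_{\ell^{2}}$-linear and is thus a linear character $r\colon G_K\to\F_{\ell^{2}}^{\ast}$.

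Finally I would identify $\varrho$ with $\operatorname{Ind}_K^\Q r$. After base change to $\Fbar_\ell$, $\sigma$ has two eigenvalues $\pm\sqrt{n_S^{2}\delta}$ and $C\otimes\Fbar_\ell$ splits as $C^{+}\oplus C^{-}$, each line being $G_K$-stable and realising one of the two $\Fbar_\ell$-extensions of $r$. Any lift to $G_\Q$ of the non-trivial element of $\Gal(K/\Q)$ conjugates $\sigma$ to $\overline\sigma=-\sigma$ (because $\overline S=-S$), hence interchanges $C^{+}$ and $C^{-}$. Therefore $\varrho\cong\operatorname{Ind}_K^\Q r$, as required. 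The main obstacle is the first half of the second paragraph: showing $A[\mathfrak{L}]=A[\ell]\cap\ker\tau$ has $\F_\ell$-dimension exactly $2$, which relies on the $\GL_2$-type structure of $A$ and on the fact that, locally at $\ell$, the order $\End_\Q(A)$ coincides with the maximal order $\cO_E\otimes\Z_\ell$; once this is in place, the construction of $r$ and the induction identification are automatic.
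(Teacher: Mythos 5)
Your proposal is correct and takes essentially the same approach as the paper: both arguments cut out the $G_\Q$-stable group $C=\ker(n_TT)\cap A[\ell]$ of order $\ell^2$ (the paper gets its size by a kernel/degree count rather than by your freeness of $T_\ell A$ over $\Z_\ell[\sqrt{m}]$), use the anticommutation $TS=-ST$ together with $\ell\nmid n_S$ to endow $C$ with an $\F_{\ell^2}$-structure, and induce the resulting $\F_{\ell^2}^*$-valued character of $G_K$. One small caveat: your identification $n_T=d$ is not justified (in general only $n_T\mid d$ holds, since $\End_K(A)$ may be larger than $\Z[\{a_n\}]$), but it is harmless here, because $\ell\nmid n_T$, $\ell$ odd and $v_\ell(m)=1$ already force $\Z_\ell[n_TT]\subseteq\End_\Q(A)\otimes\Z_\ell$ to be the maximal order of $E\otimes\Q_\ell$, which is all your Tate-module argument needs.
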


\begin{proof}
Since $\ell \nmid n_T$ and $n_T T$ is an endomorphism of $A$,  the group  $\ker (n_TT)^2 \cap A[\ell]$ is isomorphic to  $(\Z/\ell\Z)^4$. Hence, the group $C:=\ker (n_T T) \cap A[\ell]$ is isomorphic to $(\Z/\ell\Z)^2$ and, moreover,  
$G_\Q$-stable. Since $T\cdot S=-S \cdot T$ and $\ell \nmid n_S$, then $S$ acts on~$C$ and, thus, one gets an action of $\Z[\sqrt{\delta}]$ on $C$.  If $\delta \not\equiv 1 \pmod 4$ or  $\ell\neq 2$, the ring of integers $\cO$ of  $K$ acts on $C$ and, thus, the field  $\cO/\ell\Z\simeq \F_{\ell^2}$ also acts on this group. Thus, $C$ is a $\F_{\ell^2}[G_K]$-module of dimension $1$. The character $r\colon G_K\rightarrow \F_{\ell^2}^*$  defined  by
$$\Frob_\mathfrak{p}(P)=r(\Frob_{\mathfrak{p}})P,  \,\, \text { for all } P \in  C,$$ induces the representation $\varrho$.     
\end{proof} 

\vskip 0.2 cm  

Observe that if there exists a prime
$p\nmid N$ inert in $K$ satisfying $\ell\nmid a_p^2/m$, then ~$\ell \nmid n_T$. 

\begin{proposition}
\label{55}
Let  $\ell\neq 2$ be an  exceptional prime dividing $D$ ramified in $K$ and such that $\ell\nmid n_S$.
Then
the representation $\varrho$ attached to $f$ and $\ell$ is
induced  by a character $r\colon G_K\longrightarrow \F_{\ell^2}^*$.
\end{proposition}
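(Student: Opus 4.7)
The plan is to mirror the strategy of Proposition~\ref{n_t_n_s}, with $S$ now playing the role that $T$ played there, since the ramification of $\ell$ lies in $K=\Q(\sqrt\delta)$ rather than in $E=\Q(\sqrt m)$. Concretely, the aim is to construct a $G_K$-stable subgroup $C\subset A[\ell]$ of order $\ell^2$ equipped with a one-dimensional $\F_{\ell^2}$-structure compatible with the $G_K$-action; this yields the desired character $r\colon G_K\to\F_{\ell^2}^*$, and the identification $\mathrm{Ind}_K^\Q r\cong\varrho$ is then obtained by matching traces and determinants on Frobenius elements, as in Proposition~\ref{Jordan}.

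To set up the arithmetic at $\ell$, first observe that since $\ell\neq 2$ ramifies in $K$ we have $\ell\mid\delta$; combined with $\ell\mid D$ and Lemma~\ref{Santi}, the Hilbert symbol computation at $\ell$ forces $\ell\nmid m$ and $m$ to be a non-residue modulo $\ell$, so $\ell$ is inert in $E$. Now consider $\phi:=n_SS\in\End_K(A)$. Its square is $\phi^2=n_S^2\delta$, whose reduced $\ell$-adic valuation equals $1$ (as $\ell\nmid n_S$ and $\ell\|\delta$), so $\phi$ is a uniformizer of the local quaternion division algebra $B\otimes\Q_\ell$. A reduced-norm comparison shows $\phi\notin \ell\End_K(A)$, so $\phi$ acts as a nonzero nilpotent $\F_\ell$-endomorphism of $A[\ell]$ with $\phi^2=0$. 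Setting $C:=\ker(\phi)\cap A[\ell]$, the inclusion $\phi(A[\ell])\subseteq C$ together with the uniformizer property (which yields $\deg(\phi)=\ell^2$) forces $|C|=\ell^2$, and $C$ is $G_K$-stable because $\phi$ is $K$-rational.

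The main obstacle I foresee is to endow $C$ with a one-dimensional $\F_{\ell^2}$-structure without assuming that $R_\ell:=\End_K(A)\otimes\Z_\ell$ is the full maximal order $\mathcal{O}_\ell^{\max}$ of $B\otimes\Q_\ell$. Under maximality (the situation of Proposition~\ref{Jordan}), this is automatic: the residue field $\mathcal{O}_\ell^{\max}/\phi\mathcal{O}_\ell^{\max}\cong\F_{\ell^2}$ acts faithfully on $C$, making it one-dimensional. In the general case considered here, I would pass to a $K$-isogenous QM-abelian surface $A'$ whose local endomorphism ring at $\ell$ is $\mathcal{O}_\ell^{\max}$, obtained as the quotient of $A$ by an appropriate $\ell$-primary subgroup, and then transfer the canonical subgroup of $A'[\ell]$ back through the isogeny, using the compatibility of $\phi$ with this transfer to equip $C$ with the $\F_{\ell^2}$-action. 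An alternative I would explore is a direct module-theoretic argument: showing that the $G_K$-action on $C$ is irreducible (the CM-type-mod-$\ell$ hypothesis excludes a splitting over $\F_\ell$) and that $\mathrm{End}_{\F_\ell[G_K]}(C)\cong\F_{\ell^2}$, from which Schur yields the structure.

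Once $r\colon G_K\to\F_{\ell^2}^*$ is in hand, the proof concludes by repeating the Frobenius calculation of Proposition~\ref{Jordan}: the norm relation $\mathrm{Nrd}_{\F_{\ell^2}/\F_\ell}(r(\mathrm{Frob}_\mathfrak{p}))\equiv\mathrm{N}(\mathfrak{p})\pmod\ell$ gives $\det\mathrm{Ind}_K^\Q r$ equal to the mod-$\ell$ cyclotomic character, while the trace of $\mathrm{Ind}_K^\Q r(\mathrm{Frob}_p)$ reproduces $a_p\pmod{\mathfrak{L}}$ at split primes in $K$ and vanishes at inert primes, the latter matching $f$ by the CM-type-mod-$\ell$ hypothesis. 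Equality of traces and determinants on a dense set of Frobenius elements then yields $\mathrm{Ind}_K^\Q r\cong\varrho$.
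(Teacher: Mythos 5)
Your construction of $C=\ker(n_SS)\cap A[\ell]$ is exactly the paper's, and you have correctly located the crux: when $\ell$ ramifies in $K$ the ring $\cO_K/\ell\cO_K$ is not a field, so the device of Proposition~\ref{n_t_n_s} gives no $\F_{\ell^2}$-structure on $C$. But your proposal does not resolve this crux; it only names two candidate strategies, and neither stands as stated. The Schur-lemma route fails because irreducibility of the $G_K$-action on $C$ does \emph{not} follow from the CM-type-mod-$\ell$ hypothesis: the restriction to $G_K$ can be reducible over $\F_\ell$, or even non-semisimple. Indeed, by the Proposition in Section~\ref{many_endo}, the non-dihedral (projectively $C_2$) situation can only occur when $K=\Q(\sqrt{\ell^*})$, i.e.\ when $\ell$ ramifies in $K$ --- precisely the hypothesis of the present statement --- and in that situation $G_K$ acts on $C$ with a single eigenvalue, so no $\F_{\ell^2}$-scalar structure exists and your irreducibility claim breaks down. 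The isogeny route (pass to $A'$ with maximal local order, then ``transfer the canonical subgroup back'') is left entirely unexecuted, and the transfer step is problematic exactly because any isogeny repairing non-maximality at $\ell$ has degree divisible by $\ell$, so it does not carry $A'[\ell]$-data to $A[\ell]$-data in the naive way. Finally, your preliminary claim that $\ell\mid D$ ramified in $K$ forces $\ell\nmid m$ is false: in Quer's table the entry $N=243$ has $m=6$, $\delta=-3$, $D=6$, and $\ell=3$ divides both $m$ and $\delta$ (one checks the Hilbert symbol $(6,-3)_3=-1$); this error is inessential to your construction of $C$ (the reduced norm of $n_SS$ still has odd valuation at $\ell$), but it signals that Lemma~\ref{Santi} only rules out \emph{split} primes, not ramified ones.

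The paper's proof sidesteps the need for any global $\F_{\ell^2}$-structure, and this is the idea missing from your proposal. For each prime $p\nmid N$ split in $K$, the Eichler--Shimura congruence shows $A\otimes\F_p$ is $\F_p$-isogenous to the square of an elliptic curve whose endomorphism field is $L_p=\Q(\sqrt{a_p^2-4p})$, so that $\End^0(A)\hookrightarrow \operatorname{M}_2(L_p)$; Lemma~\ref{Santi} is then applied to $L_p$ (not to $K$ or $E$) to conclude that $\ell$ does not split in $L_p$. The character $r$ is defined Frobenius-by-Frobenius: $r(\Frob_\gp)=-p$ at inert $p$; $r(\Frob_\gp)=a_p/2$ in the degenerate case where $x^2-a_px+p$ has a double root mod $\ell$ (i.e.\ $\ell$ ramifies in $L_p$ or divides the conductor of the order $\cL_p=\Z[\Frob_p]$); and in the remaining case $\cO_{L_p}$ acts on $C$, making $C$ a one-dimensional $\cO_{L_p}/\ell\cO_{L_p}\simeq\F_{\ell^2}$-vector space on which $\Frob_\gp$ acts as the scalar $r(\Frob_\gp)$. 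The degenerate cases handled by the explicit value $a_p/2$ are exactly the ones on which your Schur argument founders, so without this prime-by-prime device (or a fully worked-out version of your isogeny reduction) your proposal has a genuine gap at its central step.
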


\begin{proof}
For every prime $p\nmid N$ that splits in $K$, we consider the order $\cL_p=\Z[(a_p+\sqrt{(a_p^2-4p)/2}]$ of the imaginary 
quadratic field $L_p=\Q(\sqrt{a_p^2-4p}) $. We claim that the prime $\ell$ does not split in $L_p$ and the polynomial $x^2-a_p x+p \pmod{\ell}$  has a unique root if and only if $\ell$ ramifies in $L_p$ or $\ell$ divides the conductor of $\cL_p$. Indeed,
$A\otimes \F_p$  is $\F_p$-isogenous to the square of an elliptic curve $\mathcal{E}  /\F_p$ since the  characteristic  polynomial of $\Frob_p$ acting on the Tate module  of $A$ is $(x^2-a_px+p)^2$ due to the 
Eichler-Shimura congruence. Hence, $L_p=\End_{\F_p} ^0(\mathcal{E})$ and  $\End^0(A)$ are both contained in the matrix algebra $\operatorname{M}_2(L_p)$. Therefore, $\ell$ does not split in  $L_p$ by Lemma \ref{Santi}.  The polynomial  $x^2-a_p x+p \pmod{\ell}$  has a unique root if and only if $\ell\mid (a_p^2-4p)$, i.e. $\ell$ ramifies in $L_p$, or $\ell$ is inert and divides the conductor of $\cL_p$.

Since $\ell \nmid n_S$, the group $C:=\ker (n_S S) \cap A[\ell]$ is isomorphic to $(\Z/\ell\Z)^2$ and it is $G_\Q$-stable since $\overline S=-S$.
We define the character $r\colon G_K\longrightarrow \F_{\ell^2}^*$ at the Frobenius automorphisms as follows. Let~$\mathfrak{p}$ be a prime of  $K$ over $p\nmid N$.  If $p$ is inert in $K$, we define
$r(\Frob_\mathfrak{p})=-p \pmod{\ell}$. If $p$ splits in $K$ and  $\ell$ ramifies in $L_p$ or $\ell$ divides the conductor of  $\cL_p$, we define $r(\Frob_\mathfrak{p})=r(\Frob_{\overline{\mathfrak{p}}})= a_p/2 \pmod{\ell}$. For the remaining cases, since $\ell$ does not divide the conductor of $\cL_p$, the integer ring $\cO_L$ of~$L$ acts on $C$. Thus, $C$ can be viewed as a 1-dimensional  $\cO_L/\ell\cO_L
\simeq \F_{\ell^2}$-vector space and we define $r(\Frob_\mathfrak{p})$ by
$\Frob_{\gp}(P)=r(\Frob_{\gp})P  \,\, \text { for all } P \in  C.$
In all cases, one readily checks that the character~$r$ induces the representation~$\varrho$.   
\end{proof}

\section{Isogeny characters and $\Q$-curves}

For general properties of the isogeny characters attached to cyclic isogenies of elliptic curves, we refer the reader to Sections~5 and~6 of \cite{Mazur}.  
We keep the notation of Section~\ref{many_endo}.

\begin{proposition}\label{weil}
Assume that the modular abelian surface $A$ attached to the newform 
$f(q)=\sum_{n=1}^\infty a_n q^n\in S_2(\Gamma_0(N))^{\operatorname{new}}$ 
is $\Q$-isomorphic to the Weil restriction of a $\Q$-curve $\mathcal{E}$ defined over a quadratic field $K$ (real or imaginary) of degree $m$.  
Let $\ell\mid m$ be an exceptional prime for $f$, and let $\mathfrak{l}$ be a prime ideal of $\Q(\sqrt{m})$ above $\ell$. Then:

\begin{itemize}
\item[(i)] The representation $\varrho_{f,\mathfrak{l}}\colon G_\Q\longrightarrow \GL_2(\F_{\ell})$ is isomorphic to an induced representation $\operatorname{Ind}_K^{\Q}(r)$, where $r\colon G_K\longrightarrow \F_{\ell}^*$ is a linear character.

\item[(ii)] For every prime $p\nmid N$ that splits in $K$, the polynomial $x^2-a_p x+p$ is reducible modulo $\ell$.
\end{itemize}
\end{proposition}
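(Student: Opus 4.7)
The plan is to realize the required character $r$ as an isogeny character arising from the $\Q$-curve structure on $\mathcal{E}$, and to transport the construction back to $A$ via the Weil restriction. Write $\mu\colon \mathcal{E}\to \mathcal{E}^\sigma$ for the $K$-isogeny of degree $m$ witnessing that $\mathcal{E}$ is a $\Q$-curve, and $\hat\mu\colon \mathcal{E}^\sigma\to \mathcal{E}$ for its dual.

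First I would base-change to $K$ and use the Weil restriction to identify $A_K\cong \mathcal{E}\times \mathcal{E}^\sigma$, with $G_K$ acting componentwise and any lift of the nontrivial element of $\Gal(K/\Q)$ swapping the two factors. Under this identification the endomorphism $T\in \End^0_K(A)$ with $T^2=m$ and $\overline T=T$ must be, up to a harmless rational scalar, the off-diagonal operator $(P,Q)\mapsto (\hat\mu(Q),\mu(P))$, since $\mu\hat\mu=\hat\mu\mu=[m]$.

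Next I would pin down the mod-$\gl$ torsion. Because $m$ is squarefree and $\ell\mid m$, the prime $\gl$ is the unique ramified prime of $E=\Q(\sqrt m)$ above $\ell$, with residue field $\F_\ell$, and both $\ell$ and $d\sqrt m$ lie in $\gl\cap \End(A)$. Modulo the exceptional-prime defect (in parallel with Proposition \ref{n_t_n_s}, where $\ell\nmid n_T$ is forced), this gives
\[
A[\gl]=A[\ell]\cap\ker T=\bigl(\ker\mu\cap\mathcal{E}[\ell]\bigr)\oplus\bigl(\ker\hat\mu\cap\mathcal{E}^\sigma[\ell]\bigr)=:V\oplus V'.
\]
Each summand is a $G_K$-stable $\F_\ell$-line, since $\ker\mu$ is cyclic of order $m$ and so $\ker\mu\cap\mathcal{E}[\ell]$ is cyclic of order $\ell$. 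Setting $r\colon G_K\to \F_\ell^*$ equal to the character of the $G_K$-action on $V$, any lift of the nontrivial element of $\Gal(K/\Q)$ swaps $V$ and $V'$ and conjugates $r$ to its $\sigma$-conjugate, so $A[\gl]\cong \operatorname{Ind}_K^\Q(r)$ as $G_\Q$-modules.

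Finally I would verify $\varrho_{f,\gl}\cong \operatorname{Ind}_K^\Q(r)$ by a trace check at Frobenius elements on a set of density one. At a prime $p\nmid N$ that splits in $K$, the induced trace $r(\Frob_\gp)+r(\Frob_{\overline\gp})\in \F_\ell$ must coincide with $a_p\bmod\gl$ (an integer at split primes); at an inert $p$, the induced trace vanishes, matching $a_p=c_p\sqrt m\equiv 0\bmod\gl$. The determinants agree with the cyclotomic character on both sides. Part (ii) now falls out: at split $p$, $\varrho_{f,\gl}(\Frob_p)$ is diagonalizable over $\F_\ell$ with eigenvalues $r(\Frob_\gp)$ and $r(\Frob_{\overline\gp})$, and hence its characteristic polynomial $x^2-a_px+p\bmod\gl$ splits in $\F_\ell[x]$. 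The main obstacle is the integrality bookkeeping in the middle step: securing the precise equality $A[\gl]=V\oplus V'$ rather than an inclusion, which requires controlling the $n_T$ defect separating $T\in \End^0_K(A)$ from $\End_K(A)$ and performing the geometric identification of $T$ using the $\Q$-isomorphism (not merely isogeny) $A\cong \operatorname{Res}_{K/\Q}\mathcal{E}$. Once $A[\gl]$ is correctly realized as a $G_\Q$-module, the induction description and the trace comparison are routine.
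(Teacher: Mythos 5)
Your route to part (i) runs through the Galois module $A[\gl]$ and stalls exactly where you flag it: the equality $A[\gl]=V\oplus V'$ is never established, and the fallback ``trace check'' does not close the argument because it is circular. At a split prime $p$, the congruence $r(\Frob_{\gp})+r(\Frob_{\overline{\gp}})\equiv a_p \pmod{\gl}$ is not something that ``must coincide'' by inspection---it is the entire content of part (i)---and nothing in your outline produces it (the same applies to the determinant identities, e.g.\ $r(\Frob_{\gp})\equiv -p$ at inert $p$). The paper proves these congruences with a geometric input you never invoke: the Eichler--Shimura congruence. Its proof takes $C=\ker\phi\cap\mathcal{E}[\ell]$, uses that the Hecke operator $T_p$ reduces mod $p$ to $\Frob_p+\Ver_p$, and that for split $p$ the Frobenii $\Frob_{\gp},\Frob_{\overline{\gp}}$ act on $C$ through the pair $\{\Frob_p,\Ver_p\}$; this yields the trace and determinant identities at a dense set of Frobenius elements, hence (i), and (ii) follows at once because $r(\Frob_{\gp})\in\F_\ell^*$ is then a root of $x^2-a_px+p$ mod $\ell$. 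Since that argument never touches $\End(A)$ or $A[\gl]$, the proposition needs no hypothesis of the shape $\ell\nmid n_T$, in contrast with Proposition~\ref{n_t_n_s}, whose proof (like yours) passes through an endomorphism kernel.

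Two further points. First, the obstacle you leave open is in fact removable: because $\mathcal{E}$ is completely defined over $K$, the isogenies $\mu,\hat\mu$ are honest, so the off-diagonal operator $\Phi\colon (P,Q)\mapsto(\hat\mu(Q),\mu(P))$ lies in $\End_K(A)$ with $\Phi^2=m$; any $T\in\End_K^0(A)$ with $T^2=m$ equals $\pm\Phi$ (your ``harmless rational scalar'' is forced to be $\pm 1$), so $n_T=1$---note that $n_T$ is measured against $\End_K(A)$, not against the Hecke order $\Z[d\sqrt{m}]$, so $\ell\mid d$ is irrelevant---and $\Z[\sqrt{m}]$ is maximal at $\ell$ because $x^2-m$ is Eisenstein at $\ell\mid m$ with $m$ squarefree; this pins down $A[\gl]=A[\ell]\cap\ker\Phi=V\oplus V'$. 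Second, even with that repair your argument is complete only if you additionally invoke the fact, stated in the Introduction, that $\varrho_{f,\gl}$ is realized by the Galois action on $A[\gl]$; with it, your trace check is superfluous, and without it, the trace check cannot be carried out. As submitted, therefore, the proposal has a genuine gap at its central step, although the endomorphism-kernel route could be salvaged into a correct alternative to the paper's proof.
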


\begin{proof}
Let $p\nmid N$ be a prime. Denote by $\Frob_p$ and $\Ver_p$ the Frobenius and Verschiebung endomorphisms acting on the reduction $A\otimes \F_p$. One has $\Frob_p\circ \Ver_p = p$.  
By the Eichler--Shimura congruence relation, the reduction of the Hecke operator $T_p$ modulo $p$ is given by $\Frob_p+\Ver_p$.

Let $\phi\colon \mathcal{E}\longrightarrow {\,}^\sigma \mathcal{E}$ be the isogeny of degree $m$ defined over $K$, where $\sigma$ denotes the non-trivial automorphism of $K$. The group
\[
C:=\ker\phi\cap \mathcal{E}[\ell]
\]
is isomorphic to $\Z/\ell\Z$ and is stable under the action of $G_K$. This action defines the isogeny character 
\[
r\colon G_K\longrightarrow \F_\ell^*,
\]
given by the rule that for a prime ideal $\mathfrak{p}$ of $K$ above $p\nmid N$,
\[
\Frob_{\mathfrak{p}}(P)=r(\Frob_{\mathfrak{p}})\,P \qquad \text{for all } P\in C.
\]

If $p$ splits in $K$, then the set $\{\Frob_{\mathfrak{p}},\Frob_{\overline{\mathfrak{p}}}\}$ coincides with $\{\Frob_p,\Ver_p\}$. If $p$ is inert in $K$, then $\Frob_{\mathfrak{p}}=\Frob_{p^2}$. Therefore,
\[
\operatorname{tr}\bigl(\operatorname{Ind}_K^{\Q}(r)(\Frob_p)\bigr)
= r(\Frob_{\mathfrak{p}}) + \varepsilon(p)\, r(\Frob_{\overline{\mathfrak{p}}})
\equiv a_p \pmod{\ell},
\]
and
\[
\det\bigl(\operatorname{Ind}_K^{\Q}(r)(\Frob_p)\bigr)=p.
\]
This proves (i). Statement (ii) follows from the fact that $r(\Frob_{\mathfrak{p}})$ is a root of $x^2-a_p x+p$ modulo~$\ell$.
\end{proof}

\begin{remark}
The conductor of the modular abelian surface $A$ is $N^2$ (see \cite{Carayol}). On the other hand, the conductor of the $\Q$-curve $\mathcal{E}$ is stable under Galois conjugation, and hence can be written in the form 
\[
a\cdot \prod_{\mathfrak{p}} \mathfrak{p},
\]
where $a$ is an integer and the product runs over the (possibly empty) set of prime ideals of $K$ that ramify.
By \cite{Milne}, the conductor of $A$ is equal to the norm of the conductor of $\mathcal{E}$ times $\Delta_K^2$, where $\Delta_K$ denotes the discriminant of $K$. It follows that the above product is empty, and hence the conductor of $\mathcal{E}$ is the ideal generated by $N/\Delta_K$.
\end{remark}

\section{Elliptic curves over $\Q$ of CM type mod $5$}\label{ECmod5}

In this section, we focus on a family of elliptic curves over $\Q$ whose mod~$5$ Galois representation has image isomorphic to a specific subgroup $G$ of $\GL_2(\F_5)$.

More precisely, we consider the modular maximal cyclic group $G$ of order $16$, namely
$G = M_4(2) = [16,6] = 8T7$ (for more details on this group, see the GroupNames database:
\texttt{https://people.maths.bris.ac.uk/$\sim$matyd/GroupNames/1/M4(2).html}).
A presentation of $G$ is given by
\[
G = \langle a,b \mid a^8 = b^2 = 1,\; bab = a^5 \rangle.
\]

Up to conjugacy, there is a unique subgroup of $\GL_2(\F_5)$ isomorphic to $G$. We fix the following generators:
\[
a = \begin{pmatrix} 0 & 1 \\ 2 & 0 \end{pmatrix}, 
\qquad
b = \begin{pmatrix} 4 & 0 \\ 0 & 1 \end{pmatrix}.
\]
The lattice of subgroups of $G$ is explicitly described by the following diagram:
\begin{center}
$$
\begin{tikzpicture}[scale=1.2,sgplattice]
  \node[char] at (2,0) (1) {\gn{C1}{1}};
  \node[char] at (3.25,0.803) (2) {\gn{C2}{\langle a^4 \rangle}};
  \node at (0.75,0.803) (3) {\gn{C2}{\langle b \rangle \text{ and }\langle a^4b \rangle}};
  \node[char] at (2,1.89) (4) {\gn{C4}{\langle a^2 \rangle}};
  \node[char] at (0.125,1.89) (5) {\gn{C2^2}{\langle b \rangle{\times}\langle a^4 \rangle}};
  \node[char] at (3.88,1.89) (6) {\gn{C4}{\langle a^2 b \rangle}};
  \node[norm] at (0.125,3.11) (7) {\gn{C8}{\langle a \rangle}};
  \node[norm] at (3.88,3.11) (8) {\gn{C8}{\langle ab \rangle}};
  \node[char] at (2,3.11) (9) {\gn{C2xC4}{\langle b \rangle{\times}\langle a^2 \rangle}};
  \node[char] at (2,4.06) (10) {\gn{M4(2)}{\langle a,b \rangle}};
  \draw[lin] (1)--(2) (1)--(3) (2)--(4) (2)--(5) (3)--(5) (2)--(6) (4)--(7)
     (4)--(8) (4)--(9) (5)--(9) (6)--(9) (7)--(10) (8)--(10) (9)--(10);
  \node[cnj=3] {};
\end{tikzpicture}
$$
\end{center}

The subgroups $\langle b \rangle$ and $\langle a^4 b \rangle$ are the two non-normal conjugate subgroups of $G$. The remaining quotients by normal subgroups can be easily deduced; alternatively, we refer to the GroupNames database.

By \cite{Zywina}, the non-cuspidal rational points of the modular curve $X_G(5)$ classify elliptic curves $E/\Q$ such that the image of the mod~$5$ Galois representation
$\varrho_{E,5}$ acting on $E[5]$ is isomorphic to $G$. The curve $X_G(5)$ has genus zero.

Zywina shows that a Hauptmodul for $X_G(5)$ can be expressed in terms of the Rogers–Ramanujan continued fraction
\[
r(\tau) :=
q^{1/5}
\cdot \frac{1}{1+}
\cdot \frac{q}{1+}
\cdot \frac{q^2}{1+}
\cdot \frac{q^3}{1+}
\cdot \frac{q^4}{1+} \cdots
\]

We have $\Q(X_G(5)) = \Q(t)$, where
\[
t = t(\tau) =
\frac{(-3+\phi)h(\tau)-5}{h(\tau)+(3-\phi)},
\]
with $\phi = (1+\sqrt{5})/2 \in \Q(\zeta_5)$ and $h(\tau) = r(\tau)^{-1} - r(\tau) - 1$.
The forgetful map $j : X_G(5) \to X(1)$ is given by
\[
j(t)=
\frac{5^4 t^3 (t^2+5 t+10)^3 (2 t^2+5 t+5)^3 (4 t^4+30 t^3+95 t^2+150 t+100)^3}
{(t^2+5 t+5)^5 (t^4+5 t^3+15 t^2+25 t+25)^5}.
\]

A short computation using \textit{Mathematica} (Version 14.0, Wolfram Research, Champaign, IL) and \textit{Magma} yields the following result; see \cite{repo2026} for the implementation.

\begin{proposition}\label{71}
Let $E = E(t)$ be an elliptic curve with $j(E)=j(t)$ for $t \in \Q$. The three quadratic subfields of $\Q(E[5])/\Q$ are
\[
K_1=\Q(\sqrt{-(3t^2+10t+15)}), \qquad
\Q(\sqrt{5}), \qquad
K_2=\Q(\sqrt{-5(3t^2+10t+15)}).
\]
The Galois representation $\varrho_{E,5}$ is isomorphic to both
$\operatorname{Ind}_{K_1}^{\Q}(r_1)$ and $\operatorname{Ind}_{K_2}^{\Q}(r_2)$, where
\[
\begin{array}{lll}
r_1 : \Gal(\Q(E[5])/K_1) \simeq \langle a \rangle \to \F_{25}^*, 
& r_1(a)=\eta, & \eta^2=2,\\
r_2 : \Gal(\Q(E[5])/K_2) \simeq \langle ab \rangle \to \F_{25}^*, 
& r_2(ab)=\eta', & \eta'^2=3=\eta^6.
\end{array}
\]
\end{proposition}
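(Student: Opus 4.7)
The proof has two parts: a formal group-theoretic analysis of the image group $G$ (which identifies one quadratic subfield and both characters $r_i$) and an explicit symbolic computation (which identifies the remaining two quadratic subfields in terms of~$t$).

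\textbf{Group-theoretic part.} The subgroup lattice of $G$ exhibits three normal subgroups of index~$2$: $\langle a\rangle$, $\langle b\rangle\times\langle a^2\rangle$, and $\langle ab\rangle$. All three contain the centre $Z=\F_5^*\cdot I=\langle a^2\rangle$ (immediate for the first two; for the third, $(ab)^2=3I$ generates~$Z$). Hence $\bar G\subset\PGL_2(\F_5)$ is Klein four, and by Galois correspondence these three subgroups biject with the three quadratic subfields of $\Q(E[5])/\Q$. A direct determinant calculation ($\det(a)=3$ non-square, $\det(b)=4$ square) shows $\langle b\rangle\times\langle a^2\rangle=\ker(\det\bmod(\F_5^*)^2)$; since $\det\varrho_{E,5}$ is the mod-$5$ cyclotomic character, the corresponding fixed field is $\Q(\sqrt{5})$. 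Call $K_1,K_2$ the fixed fields of $\langle a\rangle$ and $\langle ab\rangle$, respectively. For the characters: $a$ satisfies $x^2=2$ and $ab$ satisfies $x^2=3$; diagonalising them over $\F_{25}$ gives a splitting $\varrho_{E,5}|_{G_{K_i}}\otimes_{\F_5}\F_{25}\simeq r_i\oplus r_i^\sigma$ with $r_1(a)=\eta$ ($\eta^2=2$) and $r_2(ab)=\eta'$ ($\eta'^2=3=\eta^6$). The induction isomorphism $\varrho_{E,5}\simeq\operatorname{Ind}_{K_i}^{\Q}(r_i)$ follows from matching traces and determinants on Frobenius elements via the standard formula (diagonal for split primes in $K_i$, anti-diagonal and hence trace zero for inert primes); this is consistent with the fact that every element of $G$ outside $\langle a\rangle$ (respectively outside $\langle ab\rangle$) has trace zero, so that $E$ is actually of CM type mod~$5$ by both~$K_1$ and~$K_2$.

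\textbf{Explicit identification of $K_1,K_2$.} Pick a Weierstrass model $E(t)$ over $\Q(t)$ with $j$-invariant $j(t)$ compatible with Zywina's universal curve over $X_G(5)$, and compute its $5$-division polynomial $\psi_5(x,t)\in\Q(t)[x]$. The orbits of $G$ on the six cyclic subgroups of order~$5$ in $E[5]$ have sizes~$2$ and~$4$; correspondingly $\psi_5$ factors over $\Q(t)$ as a quartic (whose roots are rational over $\Q(\sqrt{5})$) times an octic (whose roots are rational over the biquadratic field $K_1\cdot K_2$). Splitting the octic further according to the two $\langle a\rangle$-orbits on the size-$4$ orbit of lines yields a quadratic resolvent in $\Q(t)[z]$ whose discriminant, computed symbolically and reduced modulo squares, equals $-(3t^2+10t+15)$. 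This gives $K_1=\Q(\sqrt{-(3t^2+10t+15)})$, and then $K_2=K_1\cdot\Q(\sqrt{5})$ yields $K_2=\Q(\sqrt{-5(3t^2+10t+15)})$, from the multiplicative relation among the three quadratic characters corresponding to the three index-$2$ subgroups.

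\textbf{Main obstacle.} The principal difficulty is the final symbolic calculation: factoring the degree-$12$ division polynomial in $\Q(t)[x]$, extracting the correct resolvent of degree~$2$, and reducing its discriminant to the square-free form $-(3t^2+10t+15)$. All other assertions are direct consequences of the subgroup structure of $G$ and the standard theory of induced representations; the explicit dependence on~$t$ in the formulas for $K_1,K_2$ could not plausibly be obtained without this calculation, which is precisely the ``lengthy calculation'' alluded to in the statement.
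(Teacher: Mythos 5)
Your group-theoretic frame is correct, and it is worth noting at the outset that the paper itself offers no proof of this proposition: it is introduced only by the phrase ``A lengthy calculation yields the following result,'' so the only thing to compare against is that implicit calculation. Within that frame, your steps check out: $G$ has exactly three index-$2$ subgroups $\langle a\rangle$, $\langle b\rangle\times\langle a^2\rangle$, $\langle ab\rangle$, which under Galois correspondence give the three quadratic subfields; $\det a=3$ is a non-square while $\det b=4$ is a square, so $\langle b\rangle\times\langle a^2\rangle$ is the kernel of the Legendre-symbol character, and since $\det\varrho_{E,5}$ is the mod-$5$ cyclotomic character its fixed field is $\Q(\sqrt5)$; the characteristic polynomials $x^2-2$ of $a$ and $x^2-3$ of $ab$ give $r_1(a)=\eta$, $r_2(ab)=\eta'$ with $\eta^2=2$, $\eta'^2=3=\eta^6$; the representation is absolutely irreducible (the eigenvectors of $a$ are not eigenvectors of $b$), so Frobenius reciprocity or Brauer--Nesbitt yields $\varrho_{E,5}\simeq\operatorname{Ind}_{K_i}^{\Q}(r_i)$; and the relation $K_2=\Q(\sqrt{5d_1})$ when $K_1=\Q(\sqrt{d_1})$ follows because the three quadratic characters of $G$ multiply to the trivial one in pairs.

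The genuine gap is that the only part of the statement not readable from the subgroup lattice --- the explicit formula $K_1=\Q(\sqrt{-(3t^2+10t+15)})$ as a function of $t$ --- is asserted rather than proved. You set up a resolvent of the octic factor of $\psi_5(x,t)$ and then declare that its discriminant ``equals $-(3t^2+10t+15)$,'' which is precisely the content of the proposition; nothing in your argument could distinguish this square class from any other, and you acknowledge the calculation was not performed. As written, the proposal is a correct reduction of the proposition to a finite symbolic computation, not a proof; to close it one must either carry out the factorization of $\psi_5(x,t)$ for a model compatible with Zywina's $j$-map, or verify the square class at sufficiently many specializations $t\in\Q$ together with an argument that it varies polynomially in $t$. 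Two smaller errors, harmless to the resolvent strategy but symptomatic that the field diagram was not computed: the roots of the quartic factor are not rational over $\Q(\sqrt5)$ --- they generate a cyclic quartic extension of $\Q(t)$ whose quadratic subfield is $\Q(\sqrt5)$ (the kernel of the action on those four root-pairs is $\langle a^4,b\rangle$, with cyclic quotient of order $4$) --- and the roots of the octic generate a degree-$8$ extension, the fixed field of $\langle a^4\rangle=\{\pm I\}$, not the biquadratic field $K_1K_2$.
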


\begin{remark}
Note that $K_1$ and $K_2$ are imaginary quadratic fields for all $t \in \Q$.
\end{remark}

\begin{proposition}
Let $E=E(t)$ be as above. Then the weight~2 newform
$f_E(q)=\sum_{n>0} a_n q^n$ attached to $E$ is of CM type mod~$5$ simultaneously by two different imaginary quadratic fields.
\end{proposition}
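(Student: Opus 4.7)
The plan is almost immediate from Proposition~\ref{71}. That proposition supplies two presentations of the mod~5 residual Galois representation $\varrho_{E,5}$: as $\operatorname{Ind}_{K_1}^\Q(r_1)$ and as $\operatorname{Ind}_{K_2}^\Q(r_2)$, with $K_1$ and $K_2$ imaginary quadratic. So the whole task reduces to translating this induced structure into a mod $5$ vanishing of Fourier coefficients.

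The key general fact, exhibited in the proof of Lemma~\ref{Nualart}, is that for any linear character $r\colon \Gal(\Qbar/K)\to \F_{\ell^n}^*$ and any rational prime $p$ inert in $K$, the matrix $\operatorname{Ind}_K^\Q(r)(\Frob_p)$ is antidiagonal, so its trace vanishes. Applying this first with $(K,r)=(K_1,r_1)$ and then with $(K,r)=(K_2,r_2)$, I conclude that for every prime $p\nmid 5N$ inert in $K_i$ one has $a_p \equiv \operatorname{tr}\varrho_{E,5}(\Frob_p) \equiv 0 \pmod 5$. By the definition given in Section~1, this says that $f_E$ is of CM type mod~5 by $K_1$ and simultaneously by $K_2$.

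To finish I would check that $K_1 \neq K_2$ and that both are imaginary. Both facts follow directly from the explicit formulas $K_1=\Q(\sqrt{-(3t^2+10t+15)})$ and $K_2=\Q(\sqrt{-5(3t^2+10t+15)})$: the ratio of their radicands is $5$, which is not a rational square, and the quadratic $3t^2+10t+15$ is strictly positive for every $t\in\Q$ since its discriminant $100-180$ is negative, so each radicand is negative.

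There is no substantive obstacle here; the proposition is a direct consequence of Proposition~\ref{71}, where all the real work went into chasing the two conjugate non-normal subgroups $\langle b\rangle$ and $\langle a^4b\rangle$ of $G$ through the Galois theory of $\Q(E[5])/\Q$ and identifying their fixed subfields with the displayed explicit quadratic fields.
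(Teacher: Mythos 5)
Your proof is correct and takes essentially the same route as the paper: the paper likewise deduces the result from Proposition~\ref{71}, noting that $a_p \equiv \operatorname{tr}\varrho_{E,5}(\Frob_p) \pmod 5$ and that the trace of a monomial representation vanishes at Frobenius elements of primes inert in the inducing field. Your additional checks (antidiagonal shape of the induced matrix, imaginarity via the negative discriminant of $3t^2+10t+15$, and distinctness of $K_1$ and $K_2$) are details the paper leaves implicit or places in the surrounding remark, but they match its argument exactly.
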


\begin{proof}
For every prime $p \neq 5$ of good reduction, the Fourier coefficient $a_p$ coincides modulo $5$ with $\operatorname{tr}\varrho_{E,5}(\Frob_p)$. Since $\varrho_{E,5}$ is monomial with respect to the quadratic fields $K_1$ and $K_2$, it follows that $a_p \equiv 0 \pmod{5}$ whenever $p$ is inert in either $K_1$ or $K_2$.
\end{proof}

\begin{remark}
The proportion of elements in $G \simeq \operatorname{Im}(\varrho_{E,5})$ with trace zero modulo $5$ is $3/4$.
\end{remark}

\begin{proposition}
Let $E=E(t)$ be as above. Then $E$ has Kodaira type $II$ or $IV^*$ at $5$, and the associated Serre weight is $k(\varrho_{E,5})=10$.
\end{proposition}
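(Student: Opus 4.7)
The plan is to treat the Kodaira type and Serre's weight separately: the former is extracted from the explicit formula for $j(t)$, while the latter follows from the subgroup structure of $G \subset \GL_2(\F_5)$.

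For the Kodaira type, I first compute $v_5(j(t))$. Writing $t = 5^k t_0$ with $k \in \Z$ and $t_0 \in \Z_{(5)}^{*}$, a direct term-by-term valuation computation on the numerator and denominator of the $j$-formula shows that $v_5(j(t)) \geq 4$ and $v_5(j(t)) \equiv 1 \pmod 3$ for every $k$. In particular, $j(t) \in \cO_5$, so $E(t)$ has potentially good reduction at $5$, and $v_5(j(t)) > 0$ rules out good reduction. Hence the Kodaira type at $5$ lies among $II, III, IV, I_0^*, IV^*, III^*, II^*$, with $v_5(\Delta_{\min}) \in \{2, 3, 4, 6, 8, 9, 10\}$ respectively. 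From the identity $v_5(j) = 3 v_5(c_4) - v_5(\Delta_{\min})$, the congruence $v_5(j) \equiv 1 \pmod 3$ forces $v_5(\Delta_{\min}) \equiv 2 \pmod 3$, leaving only $v_5(\Delta_{\min}) \in \{2, 8\}$, that is, Kodaira type $II$ or $IV^*$.

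For the Serre weight, I analyze $\varrho_{E,5}|_{I_5}$ via the subgroup structure of $G$. Since $|G| = 16$ is coprime to $5$, wild inertia maps trivially, so $\varrho_{E,5}(I_5)$ is cyclic (the image of tame inertia). The determinant of $\varrho_{E,5}|_{I_5}$ is the mod-$5$ cyclotomic character and hence surjects onto $\F_5^{*}$; from the subgroup lattice of $G$ displayed in the paper, the only cyclic subgroups with surjective determinant are $\langle a \rangle$ and $\langle ab \rangle$, both of order $8$. Since the split Cartan of $\GL_2(\F_5)$ has exponent $4$, the image must lie in a non-split Cartan, and $\varrho_{E,5}|_{I_5}$ is irreducible, of the form $\psi^a \oplus \psi^{5a}$ for $\psi \colon I_5 \to \F_{25}^{*}$ a niveau-$2$ fundamental character. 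The requirement that the image has order $8$ (i.e.\ $\gcd(24, a) = 3$), together with the determinant condition $\psi^{6a} = \psi^6$ (i.e.\ $6a \equiv 6 \pmod{24}$), forces $a \in \{9, 21\}$ and defines a single representation $\psi^9 \oplus \psi^{21}$. Writing $21 = 1 + 5 \cdot 4$ in the form $\alpha + p\beta$ with $\alpha \leq \beta$, Serre's weight formula in the niveau-$2$ case yields $k(\varrho_{E,5}) = 1 + p\alpha + \beta = 1 + 5 + 4 = 10$.

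The main obstacle will be carrying out the $v_5(j(t))$ case analysis over all $v_5(t) \in \Z$, which requires careful tracking of leading and subleading $5$-adic terms in each polynomial factor of the $j$-formula, particularly when $v_5(t) < 0$. The representation-theoretic step is clean: once the image of inertia is identified as a cyclic subgroup of order $8$ inside a non-split Cartan, Serre's formula produces $k = 10$ at once.
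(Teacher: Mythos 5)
Your proof is correct, and it takes a genuinely different route from the paper's. The paper's proof fixes an explicit model with invariants $(c_4,c_6,\Delta)$, exploits the symmetry $j(t)=j(5/t)$ to organize the cases $t=5^n u$ with $u\in\Z_5^*$, computes the full $5$-adic signature $(v_5(c_4),v_5(c_6),v_5(\Delta))$, reads the Kodaira type off Pal's tables, and then obtains the weight from Serre's recipe or Kraus's formulas, i.e.\ from the local data of the curve at $5$. You do two things differently. For the type, you use only the single invariant $v_5(j(t))$: your valuation claim is correct (one finds $v_5(j)=4$ if $v_5(t)=0$, $v_5(j)=3v_5(t)+1$ if $v_5(t)>0$, and $v_5(j)=3|v_5(t)|+4$ if $v_5(t)<0$), and combined with $v_5(j)=3v_5(c_4)-v_5(\Delta_{\min})$ and the standard table of minimal discriminant valuations at $p\geq 5$ this pins down $v_5(\Delta_{\min})\in\{2,8\}$. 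For the weight, you bypass the curve entirely: since $|G|=16$ is prime to $5$, inertia at $5$ acts tamely through a cyclic subgroup of $G$ with surjective determinant, and the lattice forces this subgroup to be $\langle a\rangle$ or $\langle ab\rangle$ of order $8$ (indeed $\det a=3$ and $\det(ab)=2$ generate $\F_5^*$, while every other cyclic subgroup has determinant image inside $\{\pm 1\}$); this yields niveau $2$, $\varrho_{E,5}|_{I_5}\cong\psi^9\oplus\psi^{21}$, and $k=1+5\cdot 1+4=10$. What each buys: the paper's signature computation gives finer information (the exact type for each $t$, and via Kraus a weight formula tied to it), whereas your argument is model-free, manifestly uniform in $t$, decouples the weight from the Kodaira type, and is stable under quadratic twisting --- which matters, since the proposition concerns any $E$ with $j(E)=j(t)$ and the twist by $\sqrt{5}$ interchanges types $II$ and $IV^*$. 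One slip to repair: the sentence ``$v_5(j(t))>0$ rules out good reduction'' is not a valid implication in general, since curves with good (e.g.\ supersingular) reduction at $5$ can have $v_5(j)>0$; good reduction only forces $v_5(j)=3v_5(c_4)\equiv 0\pmod 3$. In your situation good reduction is excluded not by positivity but by the very congruence $v_5(j)\equiv 1\pmod 3$ you invoke afterwards, since it would give $v_5(\Delta_{\min})=0\not\equiv 2\pmod 3$; so the conclusion stands, but the justification should be restated.
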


\begin{sketchproof}
Fix a Weierstrass model of $E(t)$ with invariants $(c_4,c_6,\Delta)$ such that $j(E(t))=j(t)$. Note that $j(t)=j(5/t)$. Write $t=5^n u$ with $u \in \Z_5^\times$, and compute the $5$-adic valuations $(v_5(c_4),v_5(c_6),v_5(\Delta))$. Then apply~\cite{Pal} to determine the Kodaira type at $5$, and Serre’s recipe (or alternatively~\cite{Kraus}) to compute $k(\varrho_{E,5})$.
\end{sketchproof}

We observe that \cite{Billerey} cannot be applied here since Serre’s weight exceeds $4$. Nevertheless, Theorem~\ref{teor} applies to this family of modular forms. Moreover, each representation $\varrho_{E,5}$ arises from two distinct weight~$2$ CM newforms, since it can be induced from linear characters of two different imaginary quadratic fields.
\section{Examples}
\label{examples}

The following examples provided the initial motivation for Conjecture~\ref{conj}. While some of them fall outside the scope of Theorem~\ref{teor}, many are covered by it; we include all of them here primarily for illustrative purposes.

We organize the examples into three types. In the first type, we start from modular forms associated either with quaternionic surfaces or with abelian surfaces isogenous to the square of a $\mathbb{Q}$-curve defined over an imaginary quadratic field.

In the second type, we begin with explicit equations of quadratic $\mathbb{Q}$-curves defined over imaginary quadratic fields. These yield more extensive and computationally involved examples, as they give rise to modular forms of very high level and involve imaginary quadratic fields with large class number.

The third type arises from the $5$-torsion of certain elliptic curves defined over $\mathbb{Q}$.

In the first type of examples, we start with a newform $f$ of CM type modulo $\ell$ and determine a genuine CM newform $g$ such that the congruence $f \equiv g \pmod{\ell}$ holds. If the number of verified congruences between the corresponding Fourier coefficients is sufficiently large relative to the levels involved, we may regard the conjecture as numerically verified in those cases.
In practice, it suffices (assuming GRH) to check the congruences for Fourier coefficients $a_p$ for all primes
$p \leq 2\log^2(\ell N_f N_g)$.
See \cite{SerreChebo} and \cite{Edix} for a detailed discussion of this criterion.

In some cases, however, we have only verified the congruences up to a fixed bound (typically $p < 500$, and in some instances up to $p < 3000$), chosen independently of the Sturm bound and without attempting to ensure completeness even under GRH. Our aim is solely to provide supporting numerical evidence for the phenomenon.

\subsection{Modular QM-abelian surfaces from Quer's data base} 
By using Magma, 
Quer~\cite{Quer}
found all abelian surfaces $A/\Q$ with QM arising from newforms $f(q)=\sum_{n>0}a_nq^n $ of level $N\leq 7000$.
The following table summarizes the data of the 40 cases obtained by Quer.
$$
\begin{array}{c}
\begin{array}{|ccccc||}
N & d_0 & \delta &D& \# A\\
\hline \hline
243 & 6 & -3&6& 1\\[3 pt]
675 & 2 & -3& 6&2\\[3 pt]
972 & 18 & -3&6& 1\\[3 pt]
1323 & 6&  -3& 6&2\\[3 pt]
1568 & 7 & -1&14& 2\\[3 pt]
1568 & 3 & -1& 6&2\\[3 pt]
1849 & 6 & -43&6& 1\\[3 pt]
2592 & 24 & -1& 6&2\\[3 pt]
 \hline
\end{array}
\begin{array}{|ccccc||}
N & d_0 & \delta &  D&\# A\\\hline\hline
2592 & 3& -1&6& 2\\[3 pt]
2601 & 2 & -51& 6&1\\[3 pt]
2700 & 10 & -3& 10&2\\[3 pt]
3136 & 3& -1&6& 2\\[3 pt]
3136 & 7 & -1& 14&2\\[3 pt]
3888 & 6 & -3& 6&1\\[3 pt]
3888 & 18 & -3& 6&1\\[3 pt]
3969 & 15 & -7& 15&1\\[3 pt]
\hline
\end{array}
\begin{array}{|ccccc|}
N & d_0 & \delta &  D&\# A\\
\hline \hline
5184 &  3 & -1& 6& 2\\[3pt]
5184 &  24 & -1& 6&2\\[3pt]
5292 & 10&-3& 10&2\\[3pt]
5408 & 11&-1 & 22&2\\[3pt]
5408 & 3&-13 & 6&2\\[3pt]
6075 & 6 & -3&6& 1\\[3 pt]
6400 & 6 & -1& 6&4\\[3 pt]
   &       &     & &\\[3 pt]
\hline
\end{array}\\\\
\mbox{ Table I}
\end{array}
$$
In the table, $D$ denotes the discriminant of the quaternion algebra
$B(m,\delta)_{\Q} = \End(A)\otimes \Q$, while $d_0$ is the integer such that
$\Z[\sqrt{d_0}] = \Z[\{a_n\}]$. We write $m = d_0/d^2$ for the square-free part of $d_0$, and $\#A$ denotes the number of modular QM-abelian surfaces of level $N$ having the same value of $d$ and the same imaginary quadratic field $K=\Q(\sqrt{\delta})$.

We remark that all the imaginary quadratic fields $K$ appearing in the table have class number $h(K)=1$, except for $\Q(\sqrt{-51})$ and $\Q(\sqrt{-13})$, which have class number $h(K)=2$. Moreover, the class number of $\Q(\sqrt{-D})$ is equal to $2$ in all cases listed. According to a conjecture of Coleman, the set of pairs $(D,m)$ arising from modular QM-abelian surfaces is expected to be finite; see \cite{BFGR}. It may in fact be the case that $h(\Q(\sqrt{-D})) = 2$ always holds, and more generally that $h(K)\leq 2$ in all cases.
All exceptional primes $\ell$ not dividing $m$ satisfy $\ell \mid D$.

For the $61$ exceptional primes $\ell$ appearing in Quer’s data, we have verified that there exists a normalized newform $g \in S_2(\Gamma_0(M))^{\operatorname{new}}$ with CM by $K$ satisfying the predicted congruences for all primes $p<500$, in accordance with Conjecture~\ref{conj}. The condition $\dim A_g = h(K)$ is satisfied in $57$ cases.
For the remaining four cases,
\[
(N,\delta,\ell)=
(2592,-1,2),\ (5184,-1,2),\ (5408,-1,11),\ (3969,-7,5),
\]
one has $\dim A_g = 2h(K) = 2$.

The newforms $g$ were computed using Magma. In all cases, we observe that the level $M$ of $g$ divides the level $N$ of $f$, as shown in the table below.

$$
\begin{array}{|c|c|c|c|c|}
\delta& N&d_0&\ell & M\\\hline
-1&1568 & 7&7 &1568,1568\\
&1568 &3&3 &32,1568\\
&2592&24&3&32,288\\
&2592&3&3&32,288\\
&3136& 3&3& 64,3136  \\
&3136&7 &7 &3136,3136   \\
&5184 &24 & 3& 64,576 \\
&5184& 3 & 3&  64,576 \\
&6400 &6 & 2& 6400,6400 \\
&6400&6 &3& 6400,6400 \\ \hline\hline
-7 & 3969& 15 &3 & 49\\ \hline\hline
-13 & 5408& 3 &3 & 5408,5408\\ \hline\hline
-43 & 1849& 6 &2,3 & 1849\\ \hline\hline
-51 & 2601& 2 &2 & 2601\\ \hline
\end{array}
\quad\quad
\begin{array}{|c|c|c|c|c|}
\delta& N&d_0&\ell & M\\ \hline
-3&243 & 6&2 &243\\
 & 243 &6&3 &27  \\
& 675 &2& 2& 27,27 \\
 &972 &18& 2 & 972\\
 &972 &18 &3 &27\\
 &1323 &6& 2 & 1323,1323\\
 &1323 &6& 3& 27,1323\\
 &2700&10&2 &27,27\\
 &2700&10&5& 2700,2700\\
 &3888&6,18&2&972  \\
 &3888&6,18&3& 3888  \\
 &5292&10&2& 5292,5292\\
 &5292&10 &5& 5292,5292\\
 &6075 &6& 2& 6075\\
 &6075& 6& 3 &6075 \\  \hline
\end{array} 
$$
\begin{center}
 Table II  
\end{center}

Notice that Theorem \ref{teor} cannot be applied to some of the examples above. For instance, in the case $N=243$ neither applies for $\ell=2$ nor $\ell=3$. Nevertheless, these examples also satisfy the congruences predicted for the Conjecture \ref{conj} for primes $p<500$.


\subsection{$\Q$-curves  completely defined over imaginary quadratic fields} 
As usual, we say  that a $\Q$-curve is completely defined over a quadratic field if the elliptic  curve and the isogeny to its Galois conjugate are defined over that quadratic field.
\subsubsection{First examples.}
As a first test, we have checked Conjecture \ref{conj} for all $\Q$-curves defined over imaginary  quadratic fields without CM
arising from  newforms $f(q)=\sum_{n>0}a_nq^n$ of level $N \leq 500$.

$$
\begin{array}{c}

\begin{array}{|ccccr|}
N & d_0 & \delta &\ell &M \\
\hline \hline
63 &12&-3& 2& 144\\[3 pt]
 63  &  12&-3 &3  &  27\\[3 pt]
81 & 3& -3&  3&27\\[3 pt]
98 & 2& -7 &2 &49\\[3 pt]
117 & 12&-3 & 2 &144\\ [3 pt]
 117  &  12  &  -3  & 3 & 27\\[3 pt]
160 & 8 &-1 &2 &32\\ [3 pt]
189 & 3&-3& 3&27\\[3 pt]
189 & 7& -3& 7& 27\\[3 pt]
243 & 12&-3& 2&3888\\[3 pt]
  243 & 12  &  -3 & 3& 27\\[3 pt] \hline
\end{array}
\qquad
\begin{array}{|ccccr|}
N & d_0 & \delta &\ell & M\\
\hline \hline
320 & 8& -1 & 2&32\\ [3 pt]
363 & 3& -11& 3 &121\\[3 pt]
363 & 5& -11 &5 &121\\[3 pt]
387 &3&-3 &  3&27\\[ 3 pt]
392 &8 &-7&  2&49\\[ 3 pt]
392 &2 &-7&  2&49\\[ 3 pt]
416 &  5&-1& 5 &32\\[3 pt]
441 &  12&-3& 2 &144\\[3 pt]
441  & 12 & -3 &3 & 1323\\[3 pt]
484& 3&-11 & 3 &121\\[ 3 pt]
    &     & & & \\[3 pt]
\hline
\end{array}

\\\\
\mbox{ Table III}
\end{array}
$$

From the above table,  $K=\Q(\sqrt{\delta})$ denotes the imaginary quadratic field where the corresponding $\Q$-curve is completely defined,
$d_0$ is  the integer such that $\Z[\sqrt {d_0}]=\Z[\{a_n\}]$, $m=d_0/d^2$ is the square-free part of $d_0$ so that $m$ is the degree 
of the $\Q$-curve and $M$ is the level of a newform $g$ such that $f \equiv g \pmod{\ell}$ for all primes $<500$.
For all the cases in the above table, one has $h(K)=1$. But, in contrast with the QM scenario, it turns out that the class number $h(K)$ can be quite large as shown in the following subsections.

\subsubsection{Second  examples}
For instance, there are 8 examples with $h(K) >2$ in the range $N\leq 2000$. The data $(N,m,\delta, h(K))$ for the corresponding modular abelian surfaces is given by: 
$$(1058,2,-23,3),(1058,3,-23,3),(1058,3,-23,3),(1587,2,-23,3),$$
$$ (1587,3,-23,3),(961,2,-31, 3) ,(1922,10,-31,3),  (1521,3,-39,4).$$
In these eight cases, we  have also checked Conjecture \ref{conj}. Indeed, one has $\operatorname{disc}\,(K)=\delta$ and there is a normalized newform
$g=\sum_{n>0} b_n q^n \in S_2(\Gamma_0(\delta^2))^{\operatorname{new}} $
 with CM by $K$ attached to a Hecke character~$\psi$ of  conductor $\mathfrak{{m}}=(\sqrt{\delta})$ and the  corresponding Dirichlet character $\eta$ is the quadratic character of~$(\cO/\sqrt{\delta}\cO)^*$.

Defining polynomials $P_{|\delta|} $ of the corresponding Fourier fields $E_g$ for each CM newform $g$ are:
$$P_{23}=x^3 - 6\,x - 3\,,\,\,P_{31}(x)=x^3 - 6\,x -1\,,\,\,P_{37}(x)=x^4 - 8\,x^2 + 3\,.
$$
It holds:
\begin{itemize}
\item $P_{23}\equiv(1 + x) (1 + x + x^2) \pmod 2\,,\,\, P_{23}=x^3 \pmod 3$
\item $P_{31}\equiv (1 + x) (1 + x + x^2)\pmod 2\,,\,\,P_{31}\equiv (3 + x) (3 + 2 x + x^2) \pmod 5$
    \item $ P_{39}\equiv x^2 (1 + x^2)\pmod 3\,.$
\end{itemize}

\noindent Hence, for every prime $\ell\mid  m$, there exists a unique   prime ideal $\mathfrak{l}\mid \ell$ of $E_g$  of  residue degree $1$ such that   $f\equiv g \pmod{ \mathfrak{l}}$.

\subsubsection{Third  example} The following example corresponds to a quadratic $\Q$-curve of degree 137 defined over an imaginary quadratic field with class number 117 (cf. Proposition 4.2 in \cite{Pep}). Among 
the $\Q$-curves completely defined over imaginary quadratic fields, this one has the largest known isogeny degree.
More precisely, let $\mathcal{E}=x^3+A\,x+B$ be the elliptic curve defined by
$$\begin{array}{ccl}
A&=&1713745(2643250204357 - 285242082633 \,\sqrt{\delta})/2\,,\\[4pt]
 B&=&5452825 (-18224167668804803284533 +
    63802091292233830777 \,\sqrt{\delta})\,,
    \end{array}$$ 
\noindent where $\delta=-31159$. One has that $\mathcal{E}$ is without CM and has an isogeny to its Galois conjugate of degree $137$  defined over  $K= \Q(\sqrt{\delta})$. Its  conductor is the  ideal of $K$ generated by $2\cdot 5^2\cdot 31159$. The  surface $A=\operatorname{Res}_{K/\Q}(\mathcal{E})$ satisfies
$\End_{\Q}^0(A)\simeq\Q(\sqrt{137})$ and, thus,
$A$ is ${\Q}$-isogenous to the modular abelian surface $A_f$, where $f=\sum_{n>0} a_n q^n\in S_2(\Gamma_0(2\cdot 5^2\cdot 31159^2))^{\operatorname{new}}$ is a normalized newform of CM type mod 137.

For every prime $p\nmid 2\cdot 5\cdot 31159$ splitting in $K$, we denote by $a_p=\operatorname{tr}(\operatorname{Frob}_p) $ the trace of the geometric Frobenius $\operatorname{Frob}_p$ acting on the reduction $\mathcal{E}\otimes\F_p$. For instance,

$$\begin{array}{c|r|r|r|r|r|r|r|r|}
p&11& 17& 19& 23& 29& 41& 43& 47\\\hline
a_p&0& -6&-2&6&-2&3& 4 &6\\ \hline
\end{array}
$$

\vskip 0.5truecm

\noindent The class group of $K$ is cyclic of order $117$ generated by one of the  prime ideals
 ${\mathfrak{p}}_{11}$ lying over~$11$. 
 Write ${\mathfrak{p}}_{11}^{117}=(\gamma)$ with $\gamma$ being a square mod $137$.
 To test Conjecture \ref{conj} in this case, we consider 
 the CM normalized newform $g=\sum_{n>0} b_n q^n$ attached to the following Hecke character~$\psi$ of conductor $\gm=5\sqrt{\delta}\cO=\sqrt{\delta}\cO\cdot \mathfrak{p}_5\cdot \overline{\mathfrak{p}}_5$, where $\mathfrak{p}_5$ is a prime ideal over $5$.
 First we consider the Dirichlet character $\eta\colon (\cO/\mathfrak{m})^*\longrightarrow \C^*$ of  the form $\eta=\eta_1\times\eta_2\times\eta_3$, where $\eta_1$ is the quadratic character of $(\cO/\sqrt{\delta}\cO)^*$, $\eta_2$       is a character of $(\cO/\mathfrak{p}_5)^*$ of order $4$, and $\eta_3$       is the character of
 $(\cO/\overline{\mathfrak{p}}_5)^*$ of order $4$ satisfying $\eta_3(\alpha)=\eta_2(\overline{\alpha})^{-1}$. 
 For $\mathfrak{a}=\mathfrak{p}_{11}^n (\alpha)$, we set
 $\psi(\mathfrak{a})=
 \sqrt[117]{\gamma} \,\,\cdot \eta(\alpha) $.
 One has: (i) 
  $g=\sum_{n>0} b_n q^n =
  \sum_{\mathfrak{a}} \psi(\mathfrak{a}) q^{N(\mathfrak{a})}
  \in S_2(\Gamma_0(25\cdot 31159^2))^{\operatorname{new}}$,
  (ii) the modular abelian variety attached to $g$ has dimension $\dim A_g=234$, and (iii) there is a prime $\gl$ of $E_g$ over $137$ of degree 1 such that $a_p\equiv b_p\pmod{\mathfrak{l}}$  for all primes $p\nmid 2\cdot 5\cdot 31159$.
  
\subsubsection{Some other examples} The next examples are based on the work in \cite{GL} along with Proposition 3.1 in \cite{pep2}.
The tables present data on several examples of $\Q$-curves related to Conjecture~\ref{conj}. These examples provide supporting evidence, but do not amount to a verification of the conjecture.
The notations are as follows. Again
$K=\Q(\sqrt{\delta})$ and denote by $h=h(K)$ its class number.
Let 
$\mathcal{E}\colon Y^2 = X^3+A\,X+B$
be a completely defined $\Q$-curve over $K$ of degree $m$. The conductor of $\mathcal{E}$ is denoted by $\mathcal{N}(\mathcal{E})$, and the
Kodaira types at the bad reduction prime ideals are given in the list 
$\operatorname{Kod}(\mathcal{E})$.
Finally, for every prime $\ell\mid m$, we denote by $\mathfrak{m}=\mathfrak{m}_\ell$ the conductor of 
the Hecke character $\psi_\ell$
corresponding to the
$\ell$-degree isogeny 
character $r=r_\ell$, and $\operatorname{ord}(\eta_\ell)$ is the list of orders at every prime component of the Dirichlet character~$\eta_\ell$ attached to~$\psi_\ell$. We also provide the conductor $\mathfrak{f}(r)$ of the relative cyclic extension  $L=\Qbar^{\ker r}/K$. The level $N$ of the newform associated with the Weil restriction $\operatorname{Res}_{K/\Q}(\mathcal{E})$ satisfies that $N/\Delta_K$ is a generator of
$\mathcal{N}(\mathcal{E})$.
The level $M$ of the newform with CM associated with $\psi_\ell$ is $\operatorname{Norm}_{K/\Q}(\gm_\ell)\Delta_{K/\Q}$.

\subsubsection{Some other examples}

The following examples are based on the work in \cite{GL}, together with Proposition~3.1 in \cite{pep2}. The tables below present data on several examples of $\Q$-curves related to Conjecture~\ref{conj}. These examples provide supporting evidence for the conjecture, but do not constitute a proof.

We fix the following notation. As before, let $K=\Q(\sqrt{\delta})$ and denote by $h=h(K)$ its class number. Let
\[
\mathcal{E}\colon Y^2 = X^3 + A X + B
\]
be a completely defined $\Q$-curve over $K$ of degree $m$. The conductor of $\mathcal{E}$ is denoted by $\mathcal{N}(\mathcal{E})$, and the Kodaira types at the primes of bad reduction are collected in the list $\operatorname{Kod}(\mathcal{E})$.

Finally, for every prime $\ell \mid m$, we denote by $\mathfrak{m}=\mathfrak{m}_\ell$ the conductor of the Hecke character $\psi_\ell$ associated with the $\ell$-degree isogeny character $r=r_\ell$. We also write $\operatorname{ord}(\eta_\ell)$ for the list of local orders of the Dirichlet character $\eta_\ell$ attached to $\psi_\ell$ at the prime components of $\mathfrak{m}$.

We further record the conductor $\mathfrak{f}(r)$ of the relative cyclic extension
$L=\Qbar^{\ker r}/K$. The level $N$ of the newform attached to the Weil restriction $\operatorname{Res}_{K/\Q}(\mathcal{E})$ satisfies that $N/\Delta_K$ generates $\mathcal{N}(\mathcal{E})$.
The level $M$ of the CM newform associated with $\psi_\ell$ is given by
\[
M = \operatorname{Norm}_{K/\Q}(\mathfrak{m}_\ell)\,\Delta_{K/\Q}.
\]

$$
\begin{tblr}{|c|c|c|}
\hline
\delta & h & m \\
\hline
-199 & 9 & 13 \\
\hline
\SetCell[c=3]{l} 
A = 2145/2\,(293-15\,\sqrt{\delta})\,\sqrt{\delta}^2
\\
\hline
\SetCell[c=3]{l}
B =325\,(-230503-11979\,\sqrt{\delta})\,\sqrt{\delta}^3
\\
\hline
\SetCell[c=3]{l} 
\mathcal{N}(\mathcal{E}) =
\mathfrak P_2^2  \mathfrak P_2'^2 
\mathfrak P_3^2
\mathfrak P_5^2  \mathfrak P_5'^2  
\mathfrak P_{13}^2  \mathfrak P_{13}'^2 
\mathfrak P_{\delta}^2
\\
\hline
\SetCell[c=3]{l} 
\operatorname{Kod}(\mathcal{E}) = 
I_{17}^*,I_5^*,III,III,III,II,II,I_{0}^* 
\\
\hline[0.95pt]
\SetCell[c=3]{l}
\mathfrak m = 
\mathfrak P_2^2  \mathfrak P_2'^2 
\mathfrak P_3
\mathfrak P_5  \mathfrak P_5'  
\mathfrak P_{13}  \mathfrak P_{13}' 
\mathfrak P_{\delta}
\\
\hline
\SetCell[c=3]{l}
\operatorname{ord}(\eta) = 
(2,2,4,4,4,6,6,2) 
\\
\hline
\SetCell[c=3]{l}
\mathfrak{f}(r)= 
\mathfrak{m}
\\
\hline
\end{tblr}
\qquad 
\begin{tblr}{|c|c|c|}
\hline
\delta & h & m \\
\hline
-431 & 21 & 11 \\
\hline
\SetCell[c=3]{l} 
A = 2586\,(153433-20015\,\sqrt{\delta})
\\
\hline
\SetCell[c=3]{l}
B =24136\,(598268083-7506725\,\sqrt{\delta})
\\
\hline
\SetCell[c=3]{l} 
\mathcal{N}(\mathcal{E}) =
\mathfrak P_2^6  \mathfrak P_2'^6 
\mathfrak P_3^2  \mathfrak P_3'^2  
\mathfrak P_{\delta}^2
\\
\hline
\SetCell[c=3]{l} 
\operatorname{Kod}(\mathcal{E}) = 
I_{8}^*,I_8^*,I_{0}^*,I_{0}^*,I_{0}^*
\\
\hline[0.95pt]
\SetCell[c=3]{l}
\mathfrak m = 
\mathfrak P_2^3  \mathfrak P_2'^3 
\mathfrak P_3  \mathfrak P_3'  
\mathfrak P_{\delta}
\\
\hline
\SetCell[c=3]{l}
\operatorname{ord}(\eta) = 
(2,2,2,2,2) 
\\
\hline
\SetCell[c=3]{l}
\mathfrak{f}(r)= 
\mathfrak{m}\, 
\mathfrak P_{11}
\\
\hline
\end{tblr}
$$
$$
\begin{tblr}{|c|c|c|}
\hline
\delta & h & m \\
\hline
-251 & 7 & 3\cdot 7 \\
\hline
\SetCell[c=3]{l} 
A = 1095/2 -285/2\,\sqrt{\delta}
\\
\hline
\SetCell[c=3]{l}
B =2222-1232\,\sqrt{\delta}
\\
\hline
\SetCell[c=3]{l} 
\mathcal{N}(\mathcal{E}) =
\mathfrak P_2^2 
\mathfrak P_3  \mathfrak P_3' 
\mathfrak P_{\delta}^2
\\
\hline
\SetCell[c=3]{l} 
\operatorname{Kod}(\mathcal{E}) = 
IV^*,I_1,I_{21},I_{0}^* 
\\
\hline[0.95pt]
\SetCell[c=3]{l}
\mathfrak m_3 = 
\mathfrak P_3  \mathfrak P_3' 
\mathfrak P_{\delta}
\\
\hline
\SetCell[c=3]{l}
\operatorname{ord}(\eta_3) = 
(2,2,2) 
\\
\hline
\SetCell[c=3]{l}
\mathfrak{f}(r_3)= 
\gm_3\mathfrak P_3'^{-1}  
\\
\hline[0.95pt]
\SetCell[c=3]{l}
\mathfrak m_7 = 
\mathfrak P_2  \mathfrak P_\delta
\\
\hline
\SetCell[c=3]{l}
\operatorname{ord}(\eta_7) = 
(3,2) 
\\
\hline
\SetCell[c=3]{l}
\mathfrak{f}(r_7)= 
\gm_7
\mathfrak{P}_7
\\
\hline
\end{tblr}
\qquad 
\begin{tblr}{|c|c|c|}
\hline
\delta & h & m \\
\hline
-71 & 7 & 2\cdot 5 \\
\hline
\SetCell[c=3]{l} 
A = 1065/2\,(684949 - 29055 \,\sqrt{\delta})
\\
\hline
\SetCell[c=3]{l}
B =1775\,(813956993 + 170710461 \,\sqrt{\delta})
\\
\hline
\SetCell[c=3]{l} 
\mathcal{N}(\mathcal{E}) =
\mathfrak P_2^6
\mathfrak P_2'^6
\mathfrak P_3^2 \mathfrak P_3'^2 
\mathfrak P_5^2 \mathfrak P_5'^2 
\mathfrak P_{\delta}^2
\\
\hline
\SetCell[c=3]{l} 
\operatorname{Kod}(\mathcal{E}) = 
I_2,I_5,I_5^*,I_2^*,III,III,I_0^*
\\
\hline[0.95pt]
\SetCell[c=3]{l}
\mathfrak m_2 = 
\mathfrak P_\delta
\\
\hline
\SetCell[c=3]{l}
\operatorname{ord}(\eta_2) = 
(2) 
\\
\hline
\SetCell[c=3]{l}
\mathfrak{f}(r_2)= (1)=\gm_2 \ \mathfrak P_\delta^{-1}
\\
\hline[0.95pt]
\SetCell[c=3]{l}
\mathfrak m_5 = 
\mathfrak P_3 \mathfrak P_3' \mathfrak P_\delta
\\
\hline
\SetCell[c=3]{l}
\operatorname{ord}(\eta_5) = 
(2,2,2) 
\\
\hline
\SetCell[c=3]{l}
\mathfrak{f}(r_5)= 
\mathfrak m_5
\mathfrak P_5
\\
\hline
\end{tblr}
$$

\subsection{Elliptic curves over $\Q$ and the modular maximal-cyclic group of order 16}

We consider the elliptic curve $E$ corresponding to the value $t=-4/5$ in the function field of the modular curve $X_G(5)$, as described in Section~\ref{ECmod5}. More precisely, we take
\[
E\colon y^2 = x^3 - 1487920001000\,x + 789770894663059890,
\]
which has conductor
\[
N = 2^6 \cdot 5^2 \cdot 31 \cdot 41 \cdot 223^2 \cdot 251.
\]
The image of the associated mod~$5$ Galois representation is isomorphic to the group $G=[16,6]$, and Serre’s weight is equal to $10$.

Let $f(q)=\sum_{n>0} a_n q^n \in S_2(\Gamma_0(N))^{\operatorname{new}}$ be the normalized newform attached to $E$. The newform $f$ is of CM type mod~$5$ by two imaginary quadratic fields, namely
\[
K=\Q(\sqrt{-1115}) \quad \text{and} \quad K=\Q(\sqrt{-223}).
\]
We denote by $\mathcal{O}$ the ring of integers of either field.

\vskip 0.2cm

Case $K=\Q(\sqrt{-1115})$.
The class group of $K$ is cyclic of order $10$. There exists a normalized newform
\[
g(q)=\sum_{n>0} b_n q^n \in S_2(\Gamma_0(M))^{\operatorname{new}}
\]
with $M=(8\cdot 5\cdot 223)^2$, having CM by $K$, such that $\dim A_g=10$. Moreover, there exists a prime ideal $\mathfrak{l}$ of $\Q(\{b_n\})$ above $5$ with residue degree $1$ such that
\[
a_p \equiv b_p \pmod{\mathfrak{l}}
\quad \text{for all primes } p \neq 31,41,251.
\]
More precisely, the conductor of the associated Hecke character is
\[
\mathfrak{m}=8\sqrt{-1115}\,\mathcal{O},
\]
and its Dirichlet character $\eta : (\mathcal{O}/\mathfrak{m})^* \to \C^*$ decomposes as
\[
\eta = \eta_1 \times \eta_2,
\]
where $\eta_1 : (\mathcal{O}/\sqrt{-1115}\,\mathcal{O})^* \to \C^*$ is the quadratic character, and
$\eta_2 : (\mathcal{O}/8\mathcal{O})^* \to \C^*$ is defined by
\[
\eta_2(\alpha)=\chi(\mathrm{Norm}(\alpha)),
\]
where $\chi$ is the quadratic Dirichlet character of conductor $8$, characterized by $\chi(\pm1)=1$ and~$\chi(\pm3)=-1$.

\vskip 0.2cm

Case $K=\Q(\sqrt{-223})$.
The class group of $K$ is cyclic of order $7$. There exists a normalized newform
\[
g(q)=\sum_{n>0} b_n q^n \in S_2(\Gamma_0(M))^{\operatorname{new}}
\]
with the same level $M=(8\cdot 5\cdot 223)^2$, having CM by $K$, such that $\dim A_g=14$. Moreover, there exists a prime ideal $\mathfrak{l}$ of $\Q(\{b_n\})$ above $5$ of residue degree $1$ satisfying
\[
a_p \equiv b_p \pmod{\mathfrak{l}}
\quad \text{for all primes } p \neq 31,41,251.
\]
More precisely, the conductor of the Hecke character is
\[
\mathfrak{m}=40\sqrt{-223}\,\mathcal{O},
\]
and the associated character decomposes as
\[
\eta=\eta_1\times\eta_2\times\eta_3,
\]
where $\eta_1 : (\mathcal{O}/\sqrt{-223}\,\mathcal{O})^* \to \C^*$ is the quadratic character,
$\eta_2 : (\mathcal{O}/8\mathcal{O})^* \to \C^*$ is defined by $\eta_2(\alpha)=\chi(\mathrm{Norm}(\alpha))$ with $\chi$ as above, and
$\eta_3 : (\mathcal{O}/5\mathcal{O})^* \to \C^*$ is a character of order $3$.

\bibliography{main}{}
\bibliographystyle{alpha}

\end{document}